\title{Khovanov homology and the cinquefoil}
\author[John A. Baldwin]{John A. Baldwin}
\address{Department of Mathematics \\ Boston College}
\email{john.baldwin@bc.edu}
\author[Ying Hu]{Ying Hu}
\address{Department of Mathematics\\University of Nebraska Omaha}
\email{yinghu@unomaha.edu}
\author[Steven Sivek]{Steven Sivek}
\address{Department of Mathematics\\Imperial College London}
\email{s.sivek@imperial.ac.uk}
\thanks{JAB was supported by  NSF FRG Grant DMS-1952707 and NSF CAREER Grant DMS-1454865.}
\newtheorem*{rep@theorem}{\rep@title}
\newcommand{\newreptheorem}[2]{%
\newenvironment{rep#1}[1]{%
 \def\rep@title{#2 \ref{##1}}%
 \begin{rep@theorem}}%
 {\end{rep@theorem}}}
\newtheorem {theorem}{Theorem}
\newtheorem {lemma}[theorem]{Lemma}
\newtheorem {proposition}[theorem]{Proposition}
\numberwithin{equation}{section}
\numberwithin{theorem}{section}
\theoremstyle{definition}
\newtheorem{remark}[theorem]{Remark}
\newtheorem*{remark*}{Remark}
\newlist{pcases}{enumerate}{1}
\setlist[pcases]{
  label=\bf{Case~\arabic*:}\protect\thiscase.~,
  ref=\arabic*,
  align=left,
  labelsep=0pt,
  leftmargin=0pt,
  labelwidth=0pt,
  parsep=0pt
}
\newcommand{\case}[1][]{%
  \if\relax\detokenize{#1}\relax
    \def\thiscase{}%
  \else
    \def\thiscase{~#1}%
  \fi
  \item
}
\newcommand{\Z}{\mathbb{Z}}
\newcommand{\R}{\mathbb{R}}
\newcommand{\C}{\mathbb{C}}
\newcommand{\F}{\mathbb{F}}
\newcommand{\Q}{\mathbb{Q}}
\newcommand{\spc}{\operatorname{Spin}^c}
\newcommand{\spinc}{\mathfrak{s}}
\newcommand{\lsl}{\mathit{sl}}
\newcommand{\cT}{\mathcal{T}}
\newcommand\hf{\mathit{HF}}
\newcommand\hfs{\hf^{symp}}
\newcommand\cf{\mathit{CF}}
\newcommand\cfs{\cf^{symp}}
\newcommand\hfk{\mathit{HFK}}
\newcommand\hfkhat{\widehat{\hfk}}
\newcommand\pfh{\mathit{PFH}}
\newcommand\HFK{\widehat{\mathit{HFK}}}
\DeclareFontFamily{U}{mathx}{\hyphenchar\font45}
\DeclareFontShape{U}{mathx}{m}{n}{
      <5> <6> <7> <8> <9> <10>
      <10.95> <12> <14.4> <17.28> <20.74> <24.88>
      mathx10
      }{}
\DeclareSymbolFont{mathx}{U}{mathx}{m}{n}
\DeclareMathAccent{\widecheck}{0}{mathx}{"71}
\newcommand{\HMto}{\widecheck{\mathit{HM}}}
\newcommand{\hfp}{\mathit{HF}^+}
\newcommand{\id}{\operatorname{id}}
\newcommand{\mirror}[1]{\overline{#1}}
\newcommand{\Kh}{\mathit{Kh}}
\newcommand{\lk}{\mathrm{lk}}
\newcommand{\AKh}{\mathit{AKh}}
\newcommand{\AHI}{\mathit{AHI}}
\newcommand{\Khr}{\overline{\Kh}}
\tikzset{every picture/.style=thick}
\tikzset{link/.style = { white, double = black, line width = 1.75pt, double distance = 1.25pt, looseness=1.75 }}
\tikzset{crossing/.style = {draw, circle, dotted, minimum size=0.5cm, inner sep=0, outer sep=0}}
\pgfplotsset{compat=1.12}
\begin{document}

\begin{abstract}
We prove that  Khovanov homology with coefficients in $\Z/2\Z$ detects the $(2,5)$ torus knot. Our proof makes use of a wide range of deep tools in Floer homology, Khovanov homology, and Khovanov homotopy. We combine   these tools with classical results on the dynamics of surface homeomorphisms to reduce the detection question to a problem about mutually braided unknots, which we then solve with computer assistance.  

\end{abstract}

\maketitle
\section{Introduction}
\label{sec:intro}

We prove that Khovanov homology detects the  torus knot $ T(2,5)$. In our convention, the $\delta$-grading on reduced Khovanov homology is defined by $\delta = q/2-h$, where $q$ and $h$ are the quantum and homological gradings, respectively. Our main result is the following:

\begin{theorem}\label{thm:main}
Suppose $K\subset S^3$ is a knot whose reduced Khovanov homology over $\Z/2\Z$ is 5-dimensional and is supported in a single positive $\delta$-grading. Then $K = T(2,5)$.
\end{theorem}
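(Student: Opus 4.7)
My plan is to combine Khovanov and Floer machinery with surface dynamics to cut the detection question down to a finite, computer-verifiable problem. First, from thinness and $5$-dimensionality of $\widetilde{\Kh}(K;\Z/2\Z)$, the standard bound $\dim \widetilde{\Kh}(K;\F) \geq |\det K|$ (saturated for thin knots over any field) gives $\det K = 5$, while the single positive $\delta$-grading pins down the signature and the Rasmussen $s$-invariant of $K$ to the values of $T(2,5)$. Applying the Ozsv\'ath--Szab\'o spectral sequence $\widetilde{\Kh}(\mirror{K};\Z/2\Z) \Rightarrow \HFhat(\dcover(K);\Z/2\Z)$ and combining with the rank bound $\dim \HFhat(\dcover(K)) \geq |H_1(\dcover(K))| = 5$ forces $\dcover(K)$ to be a $\Z/2\Z$ L-space of order $5$.

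Next, I would transfer information from $\widetilde{\Kh}$ to $\hfkhat$, either via a Khovanov-to-HFK spectral sequence or via the instanton analogues used by Kronheimer--Mrowka, and combine this with the detection of genus and fiberedness in knot Floer homology (Ozsv\'ath--Szab\'o, Ghiggini, Ni) to deduce that $K$ is fibered of genus $2$. The monodromy is then a mapping class $\phi$ of the once-punctured genus-$2$ surface. The L-space property of $\dcover(K)$ together with positivity of $s(K)$ should force $\phi$ to be right-veering and pseudo-Anosov with tightly constrained fractional Dehn twist coefficient; by Nielsen--Thurston theory and hyperbolic $3$-manifold considerations, $\phi$ can then be confined to a rigid normal form.

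Finally, I would present the genus-$2$ fiber as a double branched cover of a disk over six marked points, transferring $\phi$ to a $6$-braid whose lifts of certain sub-arcs are constrained to be unknots in $S^3$---this should be the origin of the ``mutually braided unknots'' condition promised by the abstract. The resulting finite enumeration of candidate braids can then be tested by direct computation of their Khovanov homologies, isolating $T(2,5)$. The main obstacle will be the mapping class group reduction in the middle: translating the abstract L-space and quasi-positivity constraints into a concrete, finite search over mutually braided unknots is where the bulk of the new ideas must lie, since Steps concerning invariants are largely packaging known spectral sequences and detection theorems, and the final computer check is routine once the enumeration is set up.
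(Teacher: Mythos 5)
Your skeleton matches the paper's (Khovanov $\Rightarrow$ knot Floer $\Rightarrow$ fibered genus-2, strongly quasipositive $\Rightarrow$ pseudo-Anosov monodromy $\Rightarrow$ hyperelliptic reduction to a braid on a marked disk $\Rightarrow$ finite computer check), but the two load-bearing new ideas are missing, and without them the argument does not close up.

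First, the reduction to a braid axis is not automatic. The hyperelliptic involution $\tau$ always lets you write the capped-off pseudo-Anosov map $\hat\psi$ of the closed genus-2 surface as a lift of a map of the $6$-marked sphere, but to conclude that $K$ (the boundary of the fiber) is the lift of the \emph{braid axis} of a $5$-braid (not a $6$-braid) you need the capping point $p$ to satisfy $\tau(p)=p$, i.e.\ to be a branch point. The paper gets this by proving that $\hat\psi$ has $p$ as its \emph{unique} fixed point, which in turn rests on the main technical theorem: $\psi$ has no fixed points at all. That is proved by computing $\hfp(S^3_0(K\#\mirror{K}),\spinc_{-2})=0$ from the thin knot Floer homology, passing through monopole Floer homology (Lee--Taubes, Kutluhan--Lee--Taubes) to periodic Floer homology and hence to the symplectic Floer homology of the monodromy, and then invoking Cotton-Clay's computation. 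Your appeal to right-veeringness, L-space conditions, and fractional Dehn twist coefficients cannot substitute for this: those constraints are satisfied by infinitely many genus-2 fibered strongly quasipositive knots with $\Delta_K=\Delta_{T(2,5)}$ (Misev's examples), whose monodromies do have fixed points, so no ``rigid normal form'' follows from them.

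Second, the mutual braiding does not come from the covering-space geometry (``lifts of sub-arcs being unknots''); unknottedness of the braid closure $B$ follows from $\Sigma(S^3,B)\cong S^3$, but the assertion that the quotient unknot $A$ is \emph{braided about} $B$ is a genuinely Khovanov-theoretic input. The paper derives it from Stoffregen--Zhang's Smith-type inequality for the Khovanov spectrum of the doubly periodic knot $K$ (this is where $\Z/2\Z$ coefficients and the $5$-dimensionality are actually used), the $\mathfrak{sl}_2(\C)$-action of Grigsby--Licata--Wehrli forcing unimodality of annular gradings, and Xie--Zhang's braid detection in annular instanton homology. Only then does Morton's theorem on exchangeable braids reduce the problem to finitely many Stallings $5$-braids; without the mutual braiding there is no finite enumeration to hand to the computer. (The final check in the paper is also not a Khovanov computation: it verifies that $\widehat{\beta^4}$ has non-monic Alexander polynomial, hence is not fibered, contradicting exchangeability.) Your opening observations about $\det K=5$ and the double branched cover of $K$ being an L-space are correct but play no role in the actual argument.
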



We remark, by contrast,  that the Jones polynomial does not detect $T(2,5)$---for example, it does not distinguish $T(2,5)$ from the knot $10_{132}$.

All previous knot detection results in Khovanov homology \cite{km-unknot,bs-trefoil,bdlls} used Floer-theoretic   invariants to deduce that the knots in question were fibered of genus at most 1, and hence belonged to a finite list. This crucial finiteness fails in  genus 2 and beyond.\footnote{For example, there are infinitely many genus-2, fibered, strongly quasipositive knots with the same Alexander polynomial as $T(2,5)$; see  \cite{misev2017families}.}  In particular, it is a long-standing open question whether knot Floer homology detects $T(2,5)$, and it was previously thought that a positive answer to this question was  necessary for a Khovanov detection result as in Theorem 1.1. Remarkably, we are  able to  prove Theorem \ref{thm:main} without fully resolving the knot Floer detection question.\footnote{Indeed, $T(2,5)$ is presently  the only knot or  link which Khovanov homology but not Floer homology is known to detect.}

One of the key innovations in this paper is our use of the relationship between Heegaard Floer homology and periodic Floer homology to gain information about the monodromies of genus-2 fibered knots with the same knot Floer homology as $T(2,5)$.  
We  then  use this to prove Theorem \ref{thm:main}, drawing on and combining in novel ways  some deep tools in Khovanov homology and Khovanov homotopy which had not  previously been brought to bear on these sorts of detection questions. In particular, the detection result in Theorem \ref{thm:main} is one of the first topological applications of Lipshitz--Sarkar's stable Khovanov homotopy type to date. 

We outline our  proof of Theorem \ref{thm:main} in detail below.

\subsection{Proof outline}

Suppose $K$ satisfies the hypotheses of Theorem \ref{thm:main}. We use Dowlin's spectral sequence from Khovanov homology to knot Floer homology \cite{dowlin} to conclude that $K$ is a genus-2,   fibered, strongly quasipositive knot, whose bigraded knot Floer homology satisfies \begin{equation}\label{eqn:hfk-eq}\hfkhat(K;\Q) \cong \hfkhat(T(2,5);\Q),\end{equation} as in \cite{bdlls}. Let $(S,h)$ be an abstract open book corresponding to the fibration of $K$.

It is enough to prove that $K$ is a torus knot. An Alexander polynomial argument shows that $K$ is not a satellite knot. So we may assume for a contradiction that $K$ is hyperbolic. Then the monodromy $h$ is freely isotopic to a pseudo-Anosov homeomorphism $\psi:S\to S$. 
 
The isomorphism \eqref{eqn:hfk-eq} enables us to calculate the $\textrm{spin}^c$ graded Heegaard Floer homology  \begin{equation}\label{eqn:calc}\hfp(S^3_0(K\#\overline K);\Q).\end{equation} 
It follows from the  work of  Lee--Taubes \cite{lee-taubes} and Kutluhan--Lee--Taubes \cite{klt1} that 
the Heegaard Floer homology of the mapping torus of an area-preserving surface diffeomorphism encodes the degree-1 part of the periodic Floer homology of the diffeomorphism, as defined by Hutchings \cite{hutchings-index,hutchings-sullivan}. We use this, together with our calculation of \eqref{eqn:calc} and Cotton-Clay's work on symplectic Floer homology \cite{cotton-clay}, to prove that  the homeomorphism $\psi$ has no fixed points.\footnote{See Remark \ref{rmk:monotone} for why we use $\hfp(S^3_0(K\#\overline K))$ rather than $\hfp(S^3_0(K))$ to conclude this.} With this, and the fact that homeomorphisms of the closed genus-2 surface commute up to isotopy with the hyperelliptic involution, we  argue that $K$ is the lift of the braid axis $A$ in the branched double cover of $S^3$ branched along  the closure $B$ of a pseudo-Anosov 5-braid representative of the unknot. In particular, $K$ is a doubly-periodic knot with unknotted quotient $A$.

We then use Stoffregen--Zhang's work \cite{stoffregen-zhang} relating the Khovanov spectrum of $K$ to the annular Khovanov spectrum of its unknotted quotient $A$ (with respect to the embedding of $A$ in the solid torus complement of a neighborhood of $B$)\footnote{Stoffregen--Zhang's result for doubly-periodic knots requires the use of $\Z/2\Z$ coefficients, which is ultimately why Theorem \ref{thm:main} is stated over $\Z/2\Z$ rather than, say, $\Q$.}, together with the $\mathfrak{sl}_2(\C)$-action on annular Khovanov homology defined by Grigsby--Licata--Wehrli \cite{grigsby-licata-wehrli}, to prove that the annular Khovanov homology of $A$ in its maximal nonzero annular grading is 1-dimensional. Combined with the spectral sequence from annular Khovanov homology to annular instanton homology due to Xie \cite{xie} and studied further by Xie-Zhang \cite{xie-zhang}, this implies that $A$ is braided with respect to $B$. In other words, $A$ and $B$ are mutually braided unknots.

Work of Morton \cite{morton} then implies that $B$ is the closure of a Stallings 5-braid. There are finitely many such braids, and we use Sage \cite{sagemath} to determine that among the pseudo-Anosov candidates that produce knots $K$ with the right Alexander polynomial, none of them are mutually braided with their braid axes.  This gives us the desired contradiction.

\subsection{Other applications} While the title of this paper is about Khovanov homology, we expect the ideas introduced here---especially those regarding the relationship between knot Floer homology and periodic Floer homology---to have wider applicability. To wit, shortly after we posted this article, Ni and Ni--Zhang used the ideas in \S\ref{sec:floer} to study monodromies of L-space knots \cite{ni-monodromy} and characterizing slopes for $T(2,5)$ \cite{ni-zhang-torus}. Moreover, recent work of Baldwin--Li--Sivek--Ye \cite{blsy} uses our ideas and results to answer an old question of Kronheimer--Mrowka  from their work on the Property P conjecture, on irreducible $SU(2)$-representations of fundamental groups of knot surgeries.  

The results of this paper also quickly imply the following.

\begin{theorem} \label{thm:elliptic-surgeries}
Let $K \subset S^3$ be a knot of genus 2 other than $T(\pm2,5)$.  Then no nontrivial Dehn surgery on $K$ can produce a lens space or a prism manifold.
\end{theorem}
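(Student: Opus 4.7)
The plan is to reduce to the setting of Theorem~\ref{thm:main} via the observation that a lens space or prism manifold surgery forces $K$ to be an L-space knot. Lens spaces are L-spaces by definition, and prism manifolds are L-spaces because they are Seifert fibered rational homology spheres with nonzero Euler number. After replacing $K$ with its mirror if needed, we may assume the surgery slope is positive, so $K$ becomes an L-space knot; it is then fibered by Ni's theorem and strongly quasipositive by Hedden's theorem.

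Next I would apply the Ozsv\'ath--Szab\'o structure theorem for Alexander polynomials of L-space knots. Combined with the genus hypothesis $g(K) = 2$, this leaves exactly two possibilities: $\Delta_K(t) = t^{-2} - t^{-1} + 1 - t + t^2 = \Delta_{T(2,5)}(t)$, or $\Delta_K(t) = t^{-2} - 1 + t^2$.

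To handle the second possibility, I would carry out a direct $d$-invariant calculation. An L-space knot with this Alexander polynomial has $V$-invariants $(V_0, V_1, V_2) = (2, 1, 0)$, which determine the correction terms of $S^3_{p/q}(K)$ in every spin$^c$ structure. A case-by-case comparison with the $d$-invariant spectra of lens spaces and prism manifolds should show that no such surgery can occur.

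In the remaining case $\Delta_K = \Delta_{T(2,5)}$, the L-space condition gives $\hfkhat(K;\mathbb{Q}) \cong \hfkhat(T(2,5);\mathbb{Q})$, and I would aim to conclude $K = T(2,5)$ by combining this with the machinery developed for Theorem~\ref{thm:main}. The main obstacle is that Theorem~\ref{thm:main} takes as input a Khovanov-theoretic hypothesis that is not directly supplied by our Dehn surgery assumption. To overcome this, I would exploit the fact that the Floer half of the proof of Theorem~\ref{thm:main} applies verbatim, forcing $K$ (if hyperbolic) to be doubly periodic with unknotted quotient $A$ and pseudo-Anosov 5-braid $B$; and then use that the equivariant descent of the lens space or prism manifold surgery to a $2$-orbifold filling of $S^3 \smallsetminus (A \cup B)$ provides enough restriction to carry out the Stallings-braid enumeration that completes the proof of Theorem~\ref{thm:main}.
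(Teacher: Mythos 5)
There is a genuine gap in your endgame. Your reduction is fine up to a point: a lens space or prism manifold surgery makes $K$ a genus-2 L-space knot (after mirroring), hence fibered and strongly quasipositive, and the Ozsv\'ath--Szab\'o structure theorem constrains $\Delta_K$. But your second Alexander polynomial case $t^2-1+t^{-2}$ is handled in the paper in one line by citing Hedden--Watson (\cite[Corollary~9]{hedden-watson}): the coefficient of $t^{g-1}$ in the Alexander polynomial of an L-space knot is $-1$, so this case simply does not occur, and since an L-space knot's full bigraded $\hfkhat$ is determined by its Alexander polynomial and genus, one gets $\hfkhat(K)\cong\hfkhat(T(2,5))$ outright. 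Your proposed $V$-invariant/$d$-invariant comparison for that case is left as ``should show'' and is both unnecessary and unsubstantiated.

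The fatal problem is the last step. From $\hfkhat(K;\Q)\cong\hfkhat(T(2,5);\Q)$ you cannot conclude $K=T(2,5)$: as the introduction emphasizes, whether knot Floer homology detects $T(2,5)$ is a long-standing open question, and the proof of Theorem~\ref{thm:main} only closes the loop because the Khovanov hypothesis feeds the Stoffregen--Zhang and Xie--Zhang arguments that establish exchangeability of the braid $\beta$; Theorem~\ref{thm:fakeT25} is useless without that exchangeability input, and a Dehn surgery hypothesis provides no substitute for it. Your proposed fix --- that the ``equivariant descent of the surgery to a $2$-orbifold filling of $S^3\smallsetminus(A\cup B)$ provides enough restriction'' --- is not an argument, and if it worked it would essentially resolve the open detection question. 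The paper's actual route stops much earlier: Theorem~\ref{thm:main2} already shows that $K$ (being a non-torus knot with the knot Floer homology of $T(2,5)$) is doubly periodic, and then classical results say a periodic non-torus knot admits no cyclic surgery (Wang--Zhou) and no Seifert fibered surgery with base $\mathbb{RP}^2$ (Miyazaki--Motegi), which contradicts the surgery hypothesis directly. That use of periodicity, rather than full detection, is the idea your proposal is missing.
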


\begin{proof}
Suppose  $K\neq T(\pm2,5)$ and that $r$-surgery on $K$ is a lens space or a prism manifold; we can assume that $r>0$ by replacing $K$ by its mirror if necessary.  Then $S^3_r(K)$ is a Heegaard Floer L-space \cite[Proposition~2.3]{osz-lens}, so $K$ is a genus-2 L-space knot.  Then it follows by combining \cite[Theorem~1.2]{osz-lens} and \cite[Corollary~9]{hedden-watson} that $\hfkhat(K) \cong \hfkhat(T(2,5))$, hence $K$ is doubly periodic by Theorem~\ref{thm:main2}.  Since $K$ is a periodic, non-torus knot, no nontrivial surgery on $K$ can be cyclic \cite{wang-zhou} or Seifert fibered with base $\mathbb{RP}^2$ \cite[Theorem~1.3]{miyazaki-motegi-periodic}, a contradiction.
\end{proof}

The lens space part of Theorem~\ref{thm:elliptic-surgeries} also follows from results of Baker in \cite{baker-smallgenus}, and the prism manifold part can  be proved in a different way using  results of Ni--Zhang in \cite{ni-zhang-finite}.

\subsection{Organization} We review some basic facts about pseudo-Anosov maps of surfaces in \S\ref{sec:pA}. In \S\ref{sec:floer}, we use periodic Floer homology to prove that a knot $K\neq T(2,5)$ with the same knot Floer homology as $T(2,5)$ is the lift of the braid axis in the branched double cover of a pseudo-Anosov 5-braid representative of the unknot. In \S\ref{sec:computations}, we prove the computer-assisted results involving Stallings braids alluded to above. We use these results to complete the proof of Theorem \ref{thm:main} in \S\ref{sec:proof}.

\subsection{Acknowledgements} We thank Ken Baker, Andy Cotton-Clay, Yi-Jen Lee, and Filip Misev for helpful correspondence, and Tye Lidman, Robert Lipshitz, Gage Martin, and Yi Ni for interesting conversations and questions related to this work.  We also thank the referee for a careful reading of this paper and helpful feedback.

\section{Pseudo-Anosov maps}\label{sec:pA} In this section, we provide a very brief review of some basic facts and terminology related to pseudo-Anosov homeomorphisms of surfaces, so that the reader who is less familiar with this subject will have an easier time following the arguments in the next section.

Suppose $h:\Sigma\to \Sigma$ is a homeomorphism of a compact, orientable surface $\Sigma$ with (possibly empty) boundary. The Nielsen--Thurston classification  \cite{thurston-diffeomorphisms} states that $h$ is freely isotopic\footnote{The word \emph{freely} here indicates  that the isotopy  need not restrict to the identity on $\partial \Sigma$.} to a homeomorphism $\phi$ which is either:
\begin{itemize}
\item \emph{periodic}, meaning that $\phi^n=\id$  for some positive integer $n$;
\item \emph{reducible}, meaning that there is a non-empty set $c = \{c_1,\dots,c_n\}$  of disjoint, essential, simple closed curves in $\Sigma$ such that $\{\phi(c_i)\}_{i=1}^n = c$; or
\item \emph{pseudo-Anosov}.
\end{itemize} 
There is an analogous classification for homeomorphisms of marked or punctured surfaces.  We clarify that in the definition of reducible, a simple closed curve is \emph{essential} if it neither bounds a disk with one or fewer markings/punctures nor cobounds an annulus with a boundary component of the surface. 
We will focus on the case in which $\phi$ is pseudo-Anosov, in which case it is neither periodic nor reducible.

If $\phi$ is  pseudo-Anosov, then there exists a transverse pair of singular measured foliations, $(\mathcal{F}_s,\mu_s)$ and $(\mathcal{F}_u,\mu_u)$ of $\Sigma$, called the \emph{stable} and \emph{unstable} foliations of $\phi$, such that \[\phi(\mathcal{F}_s,\mu_s)=(\mathcal{F}_s,\lambda^{-1}\mu_s) \textrm{ and }\phi(\mathcal{F}_u,\mu_u)=(\mathcal{F}_u,\lambda\mu_u)\] for some real number $\lambda>1$ called  the \emph{dilatation} of $\phi$ \cite{thurston-diffeomorphisms}. The singularities of $\mathcal{F}_s$ and $\mathcal{F}_u$ in the interior of $\Sigma$ are required to be ``$n$-pronged saddles" with $n\geq 3$, as shown in Figure \ref{fig:sing3}. Each foliation must have at least one singularity on every boundary component, and each boundary singularity must have a neighborhood of the form shown in Figure \ref{fig:sing3}. For marked or, equivalently, punctured surfaces, the foliations are allowed to have $1$-pronged singularities at the markings/punctures, as shown in the figure. Note that the singularities of $\mathcal{F}_s$ and $\mathcal{F}_u$ in the interior agree and have the same numbers of prongs with respect to both foliations. Since $\phi$ respects the leaf structures of these foliations, it must in particular permute the singularities with the same number of prongs.

\begin{figure}[!htbp]

\labellist 
\hair 2pt 
\tiny\pinlabel $x$ at 482 34

\endlabellist 

\begin{center}
\includegraphics[width=13.5cm]{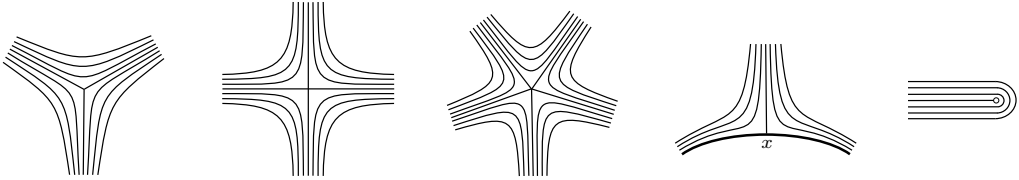}
\caption{ From left to right, one of the invariant foliations as seen in neighborhoods of: $n$-pronged interior singularities for $n= 3,4,5$; a singularity $x$ on $\partial \Sigma$ (the thickened segment represents a portion of $\partial \Sigma$); and  a 1-pronged marking/puncture.}
\label{fig:sing3}
\end{center}
\end{figure}

Suppose  $\Sigma$ has exactly one boundary component. If  $\mathcal{F}_s$ and $\mathcal{F}_u$ meet $\partial \Sigma$ in $n\geq 2$ prongs, then $\phi$ extends naturally to a pseudo-Anosov homeomorphism \[\hat\phi:\hat\Sigma\to\hat\Sigma\] of the closed surface $\hat\Sigma$ obtained from $\Sigma$ by capping off the boundary with a disk. Moreover, the invariant foliations of $\phi$ extend to stable and unstable foliations $\hat{\mathcal{F}}_s$ and $\hat{\mathcal{F}}_u$ for $\hat\phi$ in which the $n$-prongs on $\partial \Sigma$ extend to  $n$-prongs meeting at a singularity $p$ in the capping disk (except that $p$ is a smooth point when $n=2$). Note that $p$ is a fixed point of $\hat\phi$.

Suppose again that  $\Sigma$ has a single boundary component, and $h:\Sigma \to \Sigma$ is a homeomorphism which restricts to the identity  on  $\partial \Sigma$, such that $h$ is freely isotopic to a pseudo-Anosov map $\phi$.
The \emph{fractional Dehn twist coefficient}  of $h$, denoted by $c(h)$,  measures the amount of twisting about $\partial \Sigma$ in the free isotopy from $h$ to $\phi$. More precisely, let $x_1,\dots, x_n$ be the singularities of $\mathcal{F}_s$ on $\partial \Sigma$, labeled in the order  specified by the orientation of $\partial \Sigma$. There is an integer $m$ such that $\phi(x_i) = x_{i+m}$ for all $i$ (where the subscripts are taken mod $n$). If \[H:\Sigma\times [0,1]\rightarrow \Sigma\] is the free isotopy from $h$ to $\phi$, and \[G:\partial \Sigma\times [0,1]\rightarrow \partial \Sigma\times [0,1]\] is the map sending $(x,t)$ to $(H(x,t),t)$, then $G(x_i \times [0,1])$ is an arc from $(x_i,0)$ to $(x_{i+m},1)$.  Honda, Kazez, and Mati{\'c} define  \cite{hkm-veering}  \[c(h) := k/n\equiv m/n \textrm{ mod } \Z\]  to be the fractional number of times that $G(x_i\times [0,1])$ wraps around $\partial \Sigma\times [0,1]$. They  prove \cite[Theorem 1.1]{hkm-veering} that if the contact structure supported by the open book $(\Sigma,h)$ is tight, then $c(h)>0$ (in particular, $k>0$).

One can think of a braid $\beta$ as a homeomorphism of a marked disk which restricts to the identity on the boundary, and therefore define $c(\beta)$ in the same manner when the braid is pseudo-Anosov. Fractional Dehn twist coefficients of braids have been studied extensively by Ito and Kawamuro \cite{ito-kawamuro-obf}.

\section{Floer homology and fixed points}\label{sec:floer}

The  main goal of this section is to prove Theorem \ref{thm:main2} below, which will be used  in the proof of Theorem \ref{thm:main}, but may be of independent interest.

\begin{theorem}
\label{thm:main2}
Let $K\neq T(2,5)$ be a knot in $S^3$ such that \[\hfkhat(K;\Q)\cong \hfkhat(T(2,5);\Q)\] as bigraded vector spaces. Then there exists a pseudo-Anosov 5-braid $\beta$ whose closure $B=\hat \beta$ is an unknot with braid axis $A$, such that $K$ is the lift of  $A$ in the branched double cover \[\Sigma(S^3,B) \cong S^3.\]
In particular, $K$ is a doubly-periodic knot with unknotted quotient $A$ and axis $B$.
\end{theorem}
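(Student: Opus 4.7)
The plan follows the outline of Theorem \ref{thm:main}. I first make the standard topological reductions: from $\hfkhat(K;\Q) \cong \hfkhat(T(2,5);\Q)$, the Ozsv\'ath--Szab\'o and Ni detection results for Seifert genus and fiberedness show that $K$ is a genus-$2$ fibered knot, and Hedden's characterization via $\tau=g$ then gives that $K$ is strongly quasipositive; let $(S,h)$ be the associated abstract open book. The Alexander polynomial of $T(2,5)$ is the irreducible $10$th cyclotomic polynomial, which rules out nontrivial satellites, and $T(2,5)$ is the only genus-$2$ torus knot, so the assumption $K\neq T(2,5)$ forces $K$ to be hyperbolic. By Nielsen--Thurston, the monodromy $h$ is then freely isotopic to a pseudo-Anosov map $\psi : S \to S$.

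The heart of the argument is to show that $\psi$ has no fixed points on $S$. I would first exploit the isomorphism $\hfkhat(K) \cong \hfkhat(T(2,5))$, together with the K\"unneth formula for $K \# \overline K$ and the large-surgery formula, to compute the $\spc$-graded group $\hfp(S^3_0(K \# \overline K);\Q)$ in every $\spc$ structure. By the Lee--Taubes and Kutluhan--Lee--Taubes isomorphisms, the portion of this group with the correct degree data is identified with the degree-$1$ periodic Floer homology of an area-preserving representative of the monodromy on the mapping torus of $K \# \overline K$. Cotton-Clay's explicit formulas for the symplectic Floer homology of a pseudo-Anosov surface diffeomorphism then express this group in terms of interior fixed points together with contributions from prong rotations and fractional Dehn twist coefficients. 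If $\psi$ had any fixed point, the resulting generators would---after transporting through these identifications---force the rank of $\hfp(S^3_0(K \# \overline K);\Q)$ to exceed the upper bound coming from the knot Floer calculation, a contradiction. I expect this step to be the main technical obstacle: reconciling the many normalizations (area-preserving perturbations, $\spc$ and relative homological gradings, and the passage to $K \# \overline K$ rather than $K$ indicated in Remark \ref{rmk:monotone}) tightly enough to rule out \emph{any} fixed point rather than just give a rank bound.

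With $\psi$ fixed-point free, I conclude by a hyperelliptic descent argument. Cap off $\partial S$ by a disk to obtain the closed genus-$2$ surface $\hat S$, extending $\psi$ as in Section \ref{sec:pA} to a pseudo-Anosov $\hat\psi:\hat S \to \hat S$ whose only fixed point is the center $p$ of the capping disk. Because every element of the mapping class group of a closed genus-$2$ surface commutes with the hyperelliptic involution $\iota$, I may isotope $\hat\psi$ to commute with $\iota$ on the nose; since $p$ is the unique fixed point of $\hat\psi$, it must also be fixed by $\iota$, so it is a Weierstrass point. Quotienting, $\hat\psi$ descends to a pseudo-Anosov homeomorphism $\bar\psi$ of $\hat S/\iota \cong S^2$ which permutes the six Weierstrass points and fixes the one corresponding to $p$; removing that fixed point exhibits $\bar\psi$ as a pseudo-Anosov $5$-braid $\beta$, and by construction $K$ is the lift of the braid axis $A$ in the branched double cover of $S^3$ over the closure $B = \hat\beta$. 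Since this cover is $S^3$, the positive solution to the Smith conjecture forces $B$ to be the unknot, yielding exactly the conclusion of the theorem and exhibiting $K$ as a doubly-periodic knot with unknotted quotient $A$ and axis $B$.
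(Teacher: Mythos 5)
Your outline follows the paper's proof of Theorem \ref{thm:main2} essentially step for step: the reductions to a genus-2, fibered, strongly quasipositive, hyperbolic knot; fixed-point-freeness of $\psi$ via $\hfp(S^3_0(K\#\mirror{K}))$, Lee--Taubes, Kutluhan--Lee--Taubes, and Cotton-Clay; and then capping off and descending through the hyperelliptic involution to a pseudo-Anosov $5$-braid with unknotted closure. Two reassurances on the step you flag as the main obstacle: the paper's computation (via Ozsv\'ath--Szab\'o's formula for thin knots applied to $K\#\mirror{K}$, whose torsion coefficient $t_2$ vanishes) gives $\hfp(S^3_0(K\#\mirror{K}),\spinc_{-2})=0$ exactly, not merely a bound; and Cotton-Clay's theorem exhibits $\cfs_*(\psi_{sm})$ --- generated by (at least the) fixed points of $\psi$, with vanishing differential --- as a direct summand of $\hfs_*(g)$, so vanishing of the homology kills every fixed point at once. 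Your worry about ``rank bound versus ruling out any fixed point'' therefore evaporates.

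There are, however, two genuine gaps in your sketch. First, you cap off $\partial S$ and extend $\psi$ to a pseudo-Anosov map of the closed surface without verifying that the invariant foliations have $n\geq 2$ prongs on $\partial S$; if $n=1$ the capped-off foliation would have an interior $1$-pronged singularity and no such extension exists, and you also need the nontrivial rotation of the boundary prongs (from $c(h)=1/n\notin\Z$) to know that $p$ is the \emph{only} fixed point created in the capping disk. The paper's Lemma \ref{lem:prongs} supplies this via fractional Dehn twist coefficients: tightness of the supported contact structure (from strong quasipositivity) gives $c(h)=k/n$ with $k>0$, Gabai--Oertel forces $|k|\leq 1$ since $S^3$ carries no essential lamination, and $n=1$ is excluded because $c(h)=1$ would produce a taut foliation on $S^3$. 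Second, your hyperelliptic descent is too quick: isotoping $\hat\psi$ to commute with $\iota$ produces a \emph{different} representative of the mapping class, which need not be pseudo-Anosov and whose fixed points need not match those of $\hat\psi$, so you cannot directly conclude that $p$ is a Weierstrass point. The paper's Lemma \ref{lem:pA-cover} repairs this by showing the quotient map is isotopic to a pseudo-Anosov $b$ (which requires ruling out periodic and reducible quotients, e.g.\ checking that preimages of essential curves stay essential), lifting the isotopy, and invoking uniqueness of pseudo-Anosov representatives to adjust the covering map so that $\hat\psi$ itself is a lift of $b$; only then does $\tau\hat\psi=\hat\psi\tau$ together with uniqueness of the fixed point give $\tau(p)=p$. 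With those two points filled in, your argument is the paper's.
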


Theorem \ref{thm:main2} will follow from our main technical result, Theorem \ref{thm:fixed-point}, together with the fact that every pseudo-Anosov homeomorphism of the closed genus-2 surface is the lift of a pseudo-Anosov map of the  sphere with six marked points, under a branched double covering of the sphere branched along the marked points (Lemma \ref{lem:pA-cover}).

We will need a few preliminary results before proving Theorem \ref{thm:fixed-point}.

\begin{lemma}
\label{lem:hyperbolic}
Let $K\subset S^3$ be a  fibered knot  with Alexander polynomial \[\Delta_K(t) = t^2-t+1-t^{-1}+t^{-2}.\] Then either $K= T(2,\pm5)$ or $K$ is hyperbolic.
\end{lemma}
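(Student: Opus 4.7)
The plan is to apply Thurston's geometrization trichotomy (hyperbolic, torus, or satellite) and eliminate the torus and satellite cases via Alexander-polynomial arguments. The key initial observation is that
\[
t^2\,\Delta_K(t) \;=\; t^4 - t^3 + t^2 - t + 1 \;=\; \Phi_{10}(t),
\]
the tenth cyclotomic polynomial, which is irreducible over $\mathbb{Z}$. Since $K$ is fibered, the breadth of $\Delta_K$ also gives $g(K)=2$.

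If $K$ is a torus knot $T(p,q)$, I would recall that $\Delta_{T(p,q)}(t)$ factors as $\prod_{d\mid pq,\,d\nmid p,\,d\nmid q}\Phi_d(t)$ (up to units). Since $\Delta_K$ has $\Phi_{10}$ as its unique cyclotomic factor, this forces $pq=10$ and $\{|p|,|q|\}=\{2,5\}$, so $K = T(2,\pm 5)$. It remains to rule out the satellite case.

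Suppose for contradiction that $K$ is a non-trivial satellite with companion $C$, pattern $P$ in a standard solid torus $V$, and winding number $w$. Since $K$ is fibered, standard facts about fibered satellites (e.g.\ Burde--Zieschang) imply that $C$ is itself fibered and that $w\ne 0$. The satellite Alexander-polynomial formula gives
\[
\Delta_K(t) \;=\; \Delta_P(t)\cdot\Delta_C(t^w),
\]
where $\Delta_P$ is the Alexander polynomial of $P$ viewed as a knot in $S^3$. Irreducibility of $\Delta_K$ forces one factor to be a unit. If $\Delta_C(t^w)=\pm 1$ with $w\ne 0$, then $\Delta_C=\pm 1$; but $C$ is a non-trivial fibered knot, so $\deg\Delta_C=2g(C)>0$, a contradiction. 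Hence $\Delta_P=\pm 1$ and $\Delta_K=\pm\Delta_C(t^w)$. The exponents of $\Delta_K$ are the five consecutive integers $-2,\dots,2$, while those of $\Delta_C(t^w)$ are multiples of $w$; therefore $|w|=1$ and $\Delta_C=\Delta_K$, so $g(C)=2$.

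To close, I would analyze the fiber surface $F_K$ of $K$ relative to the essential torus $T=\partial N(C)$. By Thurston's classification of incompressible surfaces in fibered $3$-manifolds, after isotopy $T$ is vertical with respect to the fibration of $S^3\setminus K$, so the fibration restricts to one of the companion exterior $S^3\setminus N(C)$. With $|w|=1$, the intersection $F_K\cap T$ is a single longitude of $V$, and the outer piece $F_K^{\mathrm{out}}=F_K\cap(S^3\setminus N(C))$ is a fiber surface of $C$ of genus $g(C)=2$; in particular $\chi(F_K^{\mathrm{out}})=-3=\chi(F_K)$. Euler characteristic additivity then forces $\chi(F_K^{\mathrm{in}})=0$, and since $F_K^{\mathrm{in}}$ has two boundary circles (one on $K$ and one on $T$), it must be an annulus. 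That annulus exhibits $K$ as isotopic to a longitude of $V$, hence to the core of $V$, hence to $C$, contradicting the satellite hypothesis. The main obstacle will be citing or verifying the fibered-satellite structural input: that the companion is fibered with nonzero winding number, and that the ambient fibration restricts to one of the companion exterior once $T$ is in vertical position. The Alexander-polynomial manipulations are elementary, but the fiber-surface decomposition requires care.
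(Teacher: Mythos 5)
Your proposal tracks the paper's proof almost exactly up to the final step: both use the torus/satellite/hyperbolic trichotomy, both get $g(K)=2$ from fiberedness plus the degree of $\Delta_K$, both handle the torus case by identifying the genus-$2$ torus knots (the paper simply notes these are $T(2,\pm5)$; your cyclotomic factorization of $\Delta_{T(p,q)}$ is an equally valid route), and both run the same argument on $\Delta_K(t)=\Delta_{P(U)}(t)\cdot\Delta_C(t^w)$, using irreducibility and the nonzero $t^{\pm1}$ coefficients to force $\Delta_{P(U)}=1$, $w=1$, and $\Delta_C=\Delta_K$. Where you diverge is the endgame. The paper observes that $\Delta_{P(U)}=1$ together with $P(U)$ fibered forces $P(U)$ to be the unknot, and then invokes Hirasawa--Murasugi--Silver (Corollary 1 of \cite{hms}): a winding-number-one satellite with unknotted pattern is fibered only if the pattern is the core. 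You instead prove the needed special case directly: put the companion torus in vertical position, note that $F_K\cap T$ is then a single longitude, and use $\chi(F_K)=\chi(F_K^{\mathrm{out}})=-3$ to force the inner piece to be an annulus exhibiting $P$ as the core. This is correct in outline and has the advantage of being self-contained (and of not actually needing $P(U)$ to be unknotted -- only $w=1$ and $g(C)=2$); the cost, which you rightly flag, is that you must justify the structural input: that an essential torus in a fibered knot exterior can be isotoped transverse to the fibration so that the induced fibration of $T$ over $S^1$ has fiber a single coherently oriented longitude (coherent orientation is what rules out cancelling pairs of intersection curves), and that the restricted fibration of the companion exterior realizes $F_K^{\mathrm{out}}$ as a genuine fiber surface of $C$, hence of genus exactly $g(C)=2$. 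These facts are standard (they are essentially the content of the Burde--Zieschang and Hirasawa--Murasugi--Silver results the paper cites), but in a written-up version you would need to cite them explicitly rather than gesture at ``Thurston's classification.''
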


\begin{proof}
Since $K$ is fibered, the Alexander polynomial tells us that $K$ has genus 2. We know that $K$ is either a torus knot, a satellite knot, or hyperbolic. Since the only genus-2  torus knots are $T(2,\pm5)$, it suffices to prove that $K$ is not a satellite.

Suppose for a contradiction that $K = P(C)$ is a nontrivial satellite knot.  By ``nontrivial,'' we mean that the pattern $P \subset S^1\times D^2$ is not isotopic to the core $S^1\times \{0\}$, and the companion $C\subset S^3$ is not the unknot.  Since $K$ is fibered, the pattern $P$ has winding number $w \geq 1$, and both  $C$ and the satellite $P(U)$ are fibered \cite[Corollary~4.15, Proposition~5.5]{burde-zieschang}.  The Alexander polynomials of these knots are related by
\[ \Delta_K(t) = \Delta_{P(U)}(t) \cdot \Delta_C(t^w), \]
and $\Delta_C(t^w)$ is a nontrivial polynomial with degree $w\cdot g(C) \geq 1$.  Since $\Delta_K(t)$  is irreducible, we must then have $\Delta_{P(U)}(t)=1$.  But since $P(U)$ is also fibered, this can only happen if it has genus zero, meaning that  $P(U)$ is the unknot.

We now have $\Delta_K(t) = \Delta_C(t^w)$, which then forces  $w=1$.  Since $P$ has winding number one and $P(U)$ is the unknot, a result of Hirasawa, Murasugi, and Silver \cite[Corollary~1]{hms} says that $K=P(C)$ can only be fibered if $P$ is isotopic to the core $S^1\times\{0\} \subset S^1\times D^2$.  But this is a contradiction, so $K$ must not be a satellite after all.
\end{proof}

\begin{lemma}
\label{lem:prongs}
Let $K\subset S^3$ be a hyperbolic, fibered, strongly quasipositive knot with associated open book $(S,h)$. Then $h$ is freely isotopic to a pseudo-Anosov homeomorphism \[\psi:S \to S\] whose  stable foliation has $n\geq 2$ prongs on $\partial S$, and $h$ has fractional Dehn twist coefficient $c(h)=1/n$. 
\end{lemma}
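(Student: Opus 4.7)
My plan proceeds in two stages. The first establishes the existence of a pseudo-Anosov representative; the second pins down the fractional Dehn twist coefficient and the prong count.

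\emph{Stage 1 (Nielsen-Thurston and hyperbolicity).} By the Nielsen-Thurston classification recalled in \S\ref{sec:pA}, $h$ is freely isotopic to a periodic, reducible, or pseudo-Anosov homeomorphism. I plan to rule out the first two possibilities using hyperbolicity of $K$. Since $S^3 \setminus K$ is the mapping torus of $(S, h)$, a periodic representative would force this complement to be Seifert fibered, while a reducible one would produce an essential torus inside it; both conclusions contradict the assumption that $K$ is hyperbolic. Hence $h$ is freely isotopic to a pseudo-Anosov $\psi \colon S \to S$.

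\emph{Stage 2 (identifying $c(h) = 1/n$ and $n \geq 2$).} The key input here is that strong quasipositivity of the fibered knot $K$ implies, via Hedden's theorem, that the contact structure supported by the open book $(S,h)$ is the standard tight contact structure $\xi_{\std}$ on $S^3$. Feeding tightness into the Honda-Kazez-Mati\'c theorem cited in \S\ref{sec:pA} yields $c(h) > 0$, so $c(h) = k/n$ for some positive integer $k$. To upgrade this lower bound to the equality $c(h) = 1/n$, I plan to combine the Bennequin-type saturation $sl(K) = 2g(K) - 1$ enjoyed by transverse representatives of strongly quasipositive knots with the standard formula expressing the self-linking number of the binding of an open book in terms of $\chi(S)$ and the prong permutation of $\psi$ near $\partial S$. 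Maximality of $sl(K)$ should force the prong permutation to shift by exactly one prong, giving $k = 1$. The inequality $n \geq 2$ should follow as a compatibility check: capping off $\partial S$ by a disk extends the invariant foliations to the closed surface, and a single boundary prong would produce an inadmissible $1$-prong interior singularity for the pseudo-Anosov structure, while $n \geq 2$ is exactly the condition under which the capped surface (or the same surface with a marked point) carries a bona fide pseudo-Anosov foliation.

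\emph{Main obstacle.} Stage 1 is routine. Within Stage 2, the lower bound $c(h) \geq 1/n$ follows immediately from HKM, and strong quasipositivity yielding tightness is now standard. The substantive work lies in sharpening HKM's qualitative right-veering inequality to the exact value $c(h) = 1/n$, i.e.\ eliminating the possibility $k \geq 2$. This is where the contact-topological and transverse-knot considerations must interlock with the specific geometry of the binding of the open book of $S^3$; I expect this step to be the technical heart of the proof.
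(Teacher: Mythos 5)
Your Stage 1 is fine and matches the paper, which simply cites Thurston's theorem that the mapping torus is hyperbolic if and only if the monodromy is freely isotopic to a pseudo-Anosov map. Likewise the step ``strongly quasipositive $\Rightarrow$ tight supported contact structure $\Rightarrow$ $c(h)=k/n$ with $k>0$'' is exactly what the paper does. But both of the remaining steps in your Stage 2 have genuine gaps.

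First, the elimination of $k\geq 2$. There is no formula expressing the self-linking number of the binding in terms of the prong permutation: the binding of any open book $(S,h)$ supporting a contact structure is a transverse link with $\lsl = -\chi(S)$, full stop, independent of the monodromy. So the Bennequin saturation $\lsl(K)=2g(K)-1$ is automatic for the binding and places no constraint whatsoever on $k$; your proposed mechanism cannot close this step. The paper's actual argument is of a completely different nature: by Gabai--Oertel, if $|k|>1$ then the suspension of the stable foliation of $\psi$ is an essential lamination in the open book manifold, but $S^3$ contains no essential lamination (its universal cover would have to be $\R^3$). Hence $k=1$. Second, the inequality $n\geq 2$. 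A $1$-pronged boundary singularity is perfectly admissible for a pseudo-Anosov map of a surface with boundary, and capping off with a \emph{marked} disk yields an admissible $1$-pronged singularity at the marked point; so there is no ``inadmissibility'' contradiction, only the (harmless) failure of $\psi$ to extend to the closed unmarked surface. The paper instead observes that $n=1$ together with $k=1$ forces $c(h)=1$, and then Honda--Kazez--Mati\'c (relying on Roberts's construction) produces a taut foliation on the open book manifold, which is impossible for $S^3$. Both missing inputs are global $3$-manifold obstructions (no essential laminations, no taut foliations on $S^3$) rather than contact-topological or foliation-admissibility constraints, and without them your outline does not yield $c(h)=1/n$ or $n\geq 2$.
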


\begin{proof} The complement $S^3\setminus K$ is given by the interior of the mapping torus of $h:S\to S$. Thurston proved \cite{thurston-fiber} that this manifold is hyperbolic iff $h$ is freely isotopic to a pseudo-Anosov homeomorphism \[\psi:S \to S. \] The  stable foliation of  $\psi$ meets the boundary of $S$ in   $n$ prongs for some integer $n\geq 1$. The fractional Dehn twist coefficient of $h$ is thus $c(h)=k/n$ for some integer $k$, as discussed in \S\ref{sec:pA}. Since $K$ is strongly quasipositive, the open book $(S,h)$ supports the tight contact structure on $S^3$ \cite{hedden-positivity,baader-ishikawa}, which implies \cite{hkm-veering} that   $k>0$, as mentioned in \S\ref{sec:pA}. 

On the other hand, Gabai and Oertel proved \cite[Theorem~5.3]{gabai-oertel} that the suspension of the stable foliation in the manifold supported by the open book $(S,h)$ is an essential lamination if $|k|>1$. (They stated this result in terms of degeneracy slopes rather than fractional Dehn twist coefficients, but ours is an equivalent formulation; see \cite[\S2.2]{ch}.) In the case at hand, this manifold is $S^3$, which does not contain essential laminations: for instance, it is shown in \cite{gabai-oertel} that a manifold with an essential lamination has universal cover $\R^3$. We therefore conclude that $c(h) = 1/n$.

It remains to show that $n\geq 2$. There are several ways to do this. For instance, if $n=1$, then $c(h) = 1$, in which case the manifold supported by $(S,h)$ has a taut foliation, as stated in \cite[Theorem 1.2]{hkm-rv2} which relies on \cite[Theorem 4.7]{roberts}. But this manifold is $S^3$, a contradiction.\end{proof}


\begin{lemma}\label{lem:hfk-fibered-sqp}
Let $K\subset S^3$ be a knot such that \[\hfkhat(K;\Q)\cong \hfkhat(T(2,5);\Q)\] as bigraded vector spaces. Then $K$ has genus 2, and is fibered and strongly quasipositive. 
\end{lemma}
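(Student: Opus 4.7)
My plan is to verify the three conclusions in turn, in each case invoking a standard detection result for knot Floer homology. Since the Alexander polynomial of $T(2,5)$ is $\Delta(t)=t^2-t+1-t^{-1}+t^{-2}$ and $T(2,5)$ is an L-space knot, $\hfkhat(T(2,5);\Q)$ is supported in Alexander gradings $\{-2,-1,0,1,2\}$ with each summand one-dimensional, and its top generator sits in Maslov grading $0$. The hypothesized bigraded isomorphism transfers these properties to $\hfkhat(K;\Q)$.

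First, Ozsv\'ath--Szab\'o's theorem that knot Floer homology detects the Seifert genus gives
\[
g(K) = \max\{s : \hfkhat(K,s;\Q) \neq 0\} = 2.
\]
Second, since $\hfkhat(K,g(K);\Q)$ is one-dimensional, the Ghiggini--Ni fiberedness detection theorem implies that $K$ is fibered.

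For strong quasipositivity, I would invoke Hedden's theorem that a fibered knot in $S^3$ is strongly quasipositive if and only if $\tau(K) = g(K)$. It therefore suffices to prove $\tau(K) = 2$. The bigraded isomorphism implies that the unique generator of $\hfkhat(K,2;\Q)$ occupies Maslov grading $0$, matching the top generator for $T(2,5)$. For the L-space knot $T(2,5)$ this forces $\tau(T(2,5))=2$, and the same chain of inequalities and identifications should give $\tau(K) = 2$ here, closing the argument.

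The main obstacle is precisely this last deduction, since in general $\tau$ is not determined by the bigraded $\hfkhat$ alone. The plan is to exploit the very strong structural constraint that $\hfkhat(K;\Q)$ is bigraded-isomorphic to that of an L-space knot together with the fiberedness of $K$: these together imply enough about the filtered complex $CFK^\infty(K;\Q)$ at the top Alexander level to force $\tau(K)=g(K)$, mirroring the corresponding step in the proof of the main theorem of \cite{bdlls}.
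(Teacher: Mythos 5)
Your treatment of the genus and fiberedness claims matches the paper's: genus detection gives $g(K)=2$, and the one-dimensionality of the top Alexander summand gives fiberedness via Ghiggini--Ni (the paper cites Ni and Juh\'asz). The gap is in the last step, and you have correctly located it yourself: you never actually prove $\tau(K)=2$, you only observe that the top generator of $\hfkhat(K;\Q)$ sits in Maslov grading $0$ and assert that ``the same chain of inequalities and identifications should give $\tau(K)=2$.'' That observation alone is not enough --- knowing the Maslov grading of the top generator does not determine $\tau$, and your fallback plan (extracting constraints on the filtered complex from the L-space-knot shape of $\hfkhat$ plus fiberedness) is left entirely unexecuted.

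The missing argument is short, and it is what the paper does: the relevant fact is not about the top Alexander grading but about the Maslov grading $0$ subspace of the \emph{whole} group. For $T(2,5)$ (hence for $K$, by the bigraded isomorphism) the summand $\hfkhat_0(K;\Q)$ in Maslov grading $0$ is supported entirely in Alexander grading $2$ --- the generators in Alexander gradings $1,0,-1,-2$ have Maslov gradings $-1,-2,-3,-4$. The spectral sequence induced by the Alexander filtration converges to $\hfhat(S^3;\Q)\cong\Q$ supported in Maslov grading $0$, and the class surviving to the $E_\infty$ page lives in Alexander grading $\tau(K)$ and Maslov grading $0$. Since the only Maslov grading $0$ class available is in Alexander grading $2$, this forces $\tau(K)=2$, and Hedden's theorem (\cite[Theorem~1.2]{hedden-positivity}) then gives strong quasipositivity exactly as you intended. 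So your worry that ``$\tau$ is not determined by the bigraded $\hfkhat$ alone'' is valid in general but is not an obstacle here; the bigraded data in this particular case pins $\tau$ down for purely grading-theoretic reasons, with no need to analyze the filtered chain homotopy type.
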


\begin{proof}
The fact that \[\hfkhat(T(2,5),2;\Q)\cong \Q \textrm{ and }\hfkhat(T(2,5),i;\Q)=0 \textrm{ for } i>2\] implies that $K$ has genus 2 \cite{osz-genus} and is  fibered \cite{ni-hfk,juhasz-decomposition}.
The fact that the summand \[\hfkhat_0(T(2,5);\Q)\] in Maslov grading 0 is supported in Alexander grading $2$ implies that $\tau(K) = 2$, from which it follows that $K$ is strongly quasipositive, by \cite[Theorem 1.2]{hedden-positivity}.
\end{proof}

The following is our main technical result.

\begin{theorem}
\label{thm:fixed-point}
Let $K\subset S^3$ be a hyperbolic knot such that \[\hfkhat(K;\Q)\cong \hfkhat(T(2,5);\Q)\] as bigraded vector spaces. Let $(S,h)$ be an associated open book, and let \[\psi:S\to S\] be a pseudo-Anosov representative of $h$. Then $\psi$ has no fixed points.
\end{theorem}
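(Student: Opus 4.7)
My plan is to translate the problem into a fixed-point count for a symplectic surface diffeomorphism by composing the identifications
\[
\hfp(S^3_0(K\#\overline K);\Q) \;\longleftrightarrow\; \text{degree-1 } \pfh(\Phi) \;\longleftrightarrow\; \hfs(\Phi),
\]
where $\Phi$ is the monodromy of the fibration of $S^3_0(K\#\overline K)$ over $S^1$. The first correspondence is supplied by the work of Lee--Taubes and Kutluhan--Lee--Taubes, and the second is built into Hutchings's definition of periodic Floer homology. Cotton-Clay's formulas for $\hfs$ of a pseudo-Anosov map then convert upper bounds on $\dim \hfp$ into upper bounds on the fixed-point set of the pseudo-Anosov pieces of $\Phi$.

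I would first compute $\hfkhat(K\#\overline K;\Q)$ from the hypothesis via the K\"unneth formula, and then use the mapping cone formula to compute $\hfp(S^3_0(K\#\overline K);\Q)$ in each $\spinc$-structure. Since $K$ and $\overline K$ are both genus-$2$ fibered (Lemma~\ref{lem:hfk-fibered-sqp}), their connect sum is fibered of genus $4$, so $S^3_0(K\#\overline K)$ is a closed genus-$4$ surface bundle over $S^1$. The monodromy $\Phi$ on the closed fiber is the Nielsen--Thurston reducible map obtained by gluing $\hat\psi$ to $\hat{\overline\psi}$ along the separating simple closed curve arising from the boundary connect sum (in the notation of \S\ref{sec:pA}, $\hat\psi$ is the closed-up pseudo-Anosov extension of $\psi$ on $\hat S$).

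Next, in the $\spinc$-structure dual to the fiber, Lee--Taubes/KLT identify $\hfp$ with the degree-$1$ part of $\pfh(\Phi)$, whose chain complex is generated by the fixed points of $\Phi$. Cotton-Clay's computation writes $\dim \hfs$ of a pseudo-Anosov map as an explicit function of its fixed-point and prong data; applied to the reducible pieces of $\Phi$, any interior fixed point of $\psi$ on $S$ would produce an extra generator in $\pfh(\Phi)$ and thereby force $\dim \hfp$ to exceed the value computed in the previous paragraph. Hence $\psi$ must have no fixed points on $S$.

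The main obstacle I anticipate is setting up the first identification: the Lee--Taubes/KLT theorem requires $\pfh$ to be defined with respect to a \emph{monotone} symplectic form on the mapping torus, whose existence constrains the action of the monodromy on homology, and taking $K\#\overline K$ rather than $K$ itself (per Remark~\ref{rmk:monotone}) is presumably what makes this setup available. A secondary subtlety is that Cotton-Clay's rank formulas are stated for pseudo-Anosov diffeomorphisms, whereas $\Phi$ is merely Nielsen--Thurston reducible; one must argue that $\hfs(\Phi)$ splits along the reducing curve and then carefully account for both the fixed points forced by the boundary prongs of Lemma~\ref{lem:prongs} after capping and those contributed by a neighborhood of the reducing curve, matching the total exactly against the Heegaard Floer rank.
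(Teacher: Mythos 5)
Your proposal follows essentially the same route as the paper: compute $\hfp(S^3_0(K\#\overline K);\Q)$ in the fiber $\spc$ structures (the paper does this via the Ozsv\'ath--Szab\'o formula for thin knots rather than the mapping cone formula, and finds the group is \emph{zero} in $\spinc_{\pm 2}$), pass through Lee--Taubes and Kutluhan--Lee--Taubes to identify this with $\pfh_1 \cong \hfs$ of the monodromy, and then apply Cotton-Clay's computation for the reducible genus-4 monodromy to conclude $\mathrm{Fix}(\psi)=\emptyset$. Two small corrections: the closed fiber monodromy is $h\cup h$ on $S\cup_\partial(-S)$, not a gluing of the capped-off maps $\hat\psi$ and $\hat{\overline\psi}$ (which have no boundary left to glue along); and since the relevant Floer group vanishes outright, the careful bookkeeping you anticipate at the end collapses to the observation that Cotton-Clay's chain complex for the pseudo-Anosov piece is a direct summand with vanishing differential, hence itself zero.
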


\begin{proof}
By Lemma \ref{lem:hfk-fibered-sqp}, $K$ is a genus-2, fibered, strongly quasipositive knot. Then, by Lemma \ref{lem:prongs},  $h$ is  indeed freely isotopic to a pseudo-Anosov homeomorphism  $\psi$ whose stable foliation has $n\geq 2$ prongs on $\partial S$, and $c(h) = 1/n$. Since the latter is not an integer, $\psi$ permutes the boundary prongs nontrivially by rotating them.

To prove that $\psi$ has no fixed points, we first compute a portion of the Heegaard Floer homology of $S^3_0(K\#\overline{K})$, as discussed in the introduction. For the rest of the proof, we will work over $\Q$ without indicating these coefficients in the notation.

Note that $K\#\overline{K}$ is   a fibered knot, with open book given by the boundary connected sum \[(S,h) \#_b ({-}S,h) := (S\#_b {-}S, h\#_b h).\] The manifold $S^3_0(K\#\overline{K})$ can therefore be viewed as the mapping torus of the map \begin{equation}\label{eqn:g-map}g:F \to F,\end{equation} where $F$ is the closed genus-4 surface obtained by gluing $S$ to $-S$  along their boundaries, and $g$ is the map which restricts to $h$ on each copy of $S$. Let  $\spinc_k$ denote the $\spc$ structure on $S^3_0(K\#\overline{K})$ for which \[\langle c_1(\spinc_k),[F]\rangle = 2k.\] We will show below that \begin{equation}\label{eqn:zero-surgery}\hfp(S^3_0(K\#\overline{K}),\spinc_{-2})=0.\end{equation} 

By  conjugation symmetry, it suffices to prove that  \begin{equation}\label{eqn:zero-surgery2}\hfp(S^3_0(K\#\mirror{K}),\spinc_{2})=0.\end{equation}  We can compute this group using  Ozsv{\'a}th and Szab{\'o}'s result \cite[Theorem~1.4]{osz-alternating}, which provides a formula for  $\hfp(S^3_0(J),\spinc_{k})$ for any $k$ and any alternating  knot $J$.  Although their result is stated only for alternating knots and coefficients in $\Z$, the formula also holds for  any knot with the same bigraded knot Floer homology as an alternating knot (really, all that is needed is that the knot Floer homology is thin) and with coefficients in $\Q$. In particular, since $K$ has the same knot Floer homology as $T(2,5)$, it follows from the K{\"u}nneth formula that \[\hfkhat(K\#\overline{K})\cong \hfkhat(T(2,5)\#\overline{T(2,5)}).\] Since the latter connected sum is alternating, we may apply Ozsv{\'a}th and Szab{\'o}'s formula.

For this, first note that \[ \Delta_{K\#\mirror{K}}(t) = t^4 - 2t^3 + 3t^2 - 4t + 5 - 4t^{-1} + 3t^{-2} - 2t^{-3} + t^{-4}. \] The torsion coefficients $t_k$ of $K\#\mirror{K}$ are defined by \[t_k = \sum_{j=1}^\infty ja_{|k|+j},\] where $a_m$ is the coefficient of $t^m$ in the Alexander polynomial above. Observe that $t_{\pm 3} = 1$, and $t_k = 0$ for all other $k$. In the notation of \cite{osz-alternating},  $\sigma = 0$, and therefore \[\delta(\sigma,k)= \max\Big(0, \Big\lceil\frac{\sigma-2|k|}{4}\Big\rceil\Big) = 0.\] Then \cite[Theorem~1.4]{osz-alternating} says that for $k>0$, \[ \hfp(S^3_0(K\#\mirror{K}),\spinc_k) \cong \Q^{b_k} \oplus \left(\Q[U] / U^{\delta(\sigma,k)}\right) \]
where \[(-1)^{k+\sigma/2} b_k = \delta(\sigma,k) - t_k.\]  But $\delta(\sigma,2) = 0$ and hence $b_2 = \delta(\sigma,2)-t_2 = 0$ as well. This proves \eqref{eqn:zero-surgery2} and thus \eqref{eqn:zero-surgery}.

We  will next combine this computation with a series of difficult isomorphisms proved by others to show that the symplectic Floer homology of the map $g$ in \eqref{eqn:g-map} vanishes. 

Let $Y = S^3_0(K\#\mirror{K})$ for convenience of notation. Choose an area form $\omega$ on $F$ and let $\hat g$ be an area-preserving diffeomorphism in the mapping class of $g$. Let $\Gamma = [\mu]$ be the generator of   \[H_1(Y ;\Z)\cong \Z\] satisfying $\Gamma \cdot [F] = 1$, where $\mu$ is a meridian of $K\#\mirror{K}$, and therefore a section of the fibration $\pi: Y \to S^1$. Then $\Gamma$ is a positive monotone class with respect to $\omega$, in the sense of \cite[Definition~1.1]{lee-taubes}.  This means that if we define the class
\begin{equation} \label{eq:c-gamma}
c_\Gamma = 2 \mathit{PD}(\Gamma) + e(\ker d\pi)
\end{equation}
in $H^2(Y)$, then there is a relation of the form $[w_{\hat{g}}] = -\lambda c_\Gamma$, where $\lambda$ is a positive real number and $w_{\hat{g}}$ is a certain closed 2-form on $Y$ obtained from the $\hat{g}$-invariant area form $\omega$ on $F$.  Since $H^2(Y;\R)\cong\R$, both sides of \eqref{eq:c-gamma} are determined by their evaluation on $[F]$, so the positive monotone condition is further equivalent to $-\lambda\big(2(\Gamma\cdot [F]) + \chi(F)\big) > 0$, hence to $\chi(F) < -2(\Gamma\cdot[F]) = -2$; it is satisfied as long as $F$ has genus at least three.

There is an isomorphism \[ \pfh(\hat g,\Gamma) = \pfh_1(\hat{g}) \cong \hfs_\ast(\hat{g})  \]
relating Hutchings's periodic Floer homology \cite{hutchings-index,hutchings-sullivan} to symplectic Floer homology, as explained in \cite[Appendix~7.2]{lee-taubes}. We  will  show that the left-hand side vanishes, using our computation  \eqref{eqn:zero-surgery} together with the relationship between Heegaard Floer homology and periodic Floer homology, due to Lee--Taubes \cite{lee-taubes} and Kutluhan--Lee--Taubes \cite{klt1}.

Lee and Taubes \cite[Theorem~1.1]{lee-taubes} proved that
\[ \pfh(\hat{g}, \Gamma) \cong \HMto^{-*}(Y,\spinc_{-2},c_+), \]
where  $c_+$ denotes a positively monotone perturbation.  Since $\spinc_{-2}$ is non-torsion, Kronheimer and Mrowka proved that
\[ \HMto^{-\ast}(Y,\spinc_{-2},c_+) \cong \HMto^{-\ast}(Y,\spinc_{-2}) \cong \HMto^{-\ast}(Y,\spinc_{-2},c_b), \]
where $c_b$ denotes a balanced perturbation.  These isomorphisms are \cite[Theorem~31.1.2]{km-book} and \cite[Theorem~31.1.1]{km-book}, respectively: both isomorphisms are stated for the \emph{completed} version of monopole Floer homology, but there is no difference between that version and the uncompleted version above for non-torsion $\spc$ structures.  Finally, Kutluhan, Lee, and Taubes \cite[Main~Theorem]{klt1} proved that
\[ \HMto^{-\ast}(Y,\spinc_{-2},c_b) \cong \mathit{HF}^{-\ast}_+(Y,\spinc_{-2}). \]
We showed above  \eqref{eqn:zero-surgery} that $ \hfp(Y,\spinc_{-2}) = 0, $ so the Heegaard Floer cohomology vanishes as well,
\[\mathit{HF}^{-\ast}_+(Y,\spinc_{-2})=0,\]  by the Universal Coefficient Theorem. 
 Combining  these isomorphisms, we conclude that the symplectic Floer homology of $g$ vanishes, \begin{equation}\label{eqn:zero-g}\hfs_\ast(g):=\hfs_\ast(\hat g) = 0,\end{equation} as claimed. To deduce from this that $\psi$ has no fixed points, we apply Cotton-Clay's work on computing symplectic Floer homology \cite{cotton-clay}.
 

The map $g$ is reducible, with reducing curve $\partial S\subset F$ which separates $F$ into the components $\pm S$. The restriction of $g$ to each component is the monodromy $h$, which is isotopic to the pseudo-Anosov map $\psi$. Cotton-Clay proves \cite[Theorem 4.16]{cotton-clay}  that  $\hfs_\ast(g)$ has as a direct summand the symplectic Floer chain complex $\cfs_\ast(\psi_{sm})$ associated with a certain \emph{canonical smooth} representative $\psi_{sm}$ of $\psi$. Since $\psi$ rotates the boundary prongs  (there are no Type IIId fixed points, in the terminology of \cite[\S4]{cotton-clay}), the complex $\cfs_\ast(\psi_{sm})$ is   generated simply by the fixed points of $\psi_{sm}$, which are greater than or equal in number to the fixed points of $\psi$. Thus, \begin{equation}\label{eqn:dimfix}\dim \cfs_\ast(\psi_{sm}) \geq |\textrm{Fix}(\psi)|.\end{equation} The fact that $\hfs_\ast(g)$ vanishes \eqref{eqn:zero-g} then implies that \[\cfs_*(\psi_{sm}) = 0,\] from which we conclude that $\psi$ has no fixed points by \eqref{eqn:dimfix}, as desired.

We remark that the calculation of $\hfs_\ast(g)$ in \cite{cotton-clay} is stated with coefficients in $\Z/2\Z$, but everything we have used above also holds over $\Q$. Indeed, the fact that $\cfs_*(\psi_{sm})$ is a direct summand of $\hfs_\ast(g)$ comes from the facts that: 1) the latter is the homology of the symplectic Floer chain complex of a \emph{perturbed standard form} representative of $g$, of which the complex $\cfs_*(\psi_{sm})$ is a direct summand, and 2) the differential on $\cfs_*(\psi_{sm})$ vanishes. Both  hold for purely topological and/or index reasons, and are thus true  over any coefficient ring.
\end{proof}

\begin{remark}
\label{rmk:monotone} The reader may wonder why in the proof of Theorem \ref{thm:fixed-point} we use the Heegaard Floer homology of $S^3_0(K\#\mirror{K})$ rather than that of $S^3_0(K)$ to show that $\psi$ has no fixed points. The reason is that  the isomorphism of Lee--Taubes \cite{lee-taubes}, \[\pfh(\hat g, \Gamma)=\pfh_1(\hat{g}) \cong \hfs_\ast(\hat{g}),\]   used in the proof  requires that $\Gamma$ is monotone.  This would not hold if we replaced $K\#\mirror{K}$ with  $K$: in genus 2, the degree-1 class $\Gamma$ is not monotone. There is a similar isomorphism in the non-monotone case, using  local coefficients rather than $\Q$ \cite[Theorem~6.2]{lee-taubes}, and a relationship between this version of $\pfh$ and the Heegaard Floer homology  $S^3_0(K)$ in the torsion $\spc$ structure, with local coefficients, but it is not quite as clear that we can compute the latter from the knot Floer homology of $K$, and it is not as clear how to interpret the result of such a computation in terms of the fixed points of $\psi$. 
\end{remark}

We will use the lemma below in the proofs of Theorem \ref{thm:main2} and Proposition \ref{prop:prong-structure}.

\begin{lemma}
\label{lem:pA-cover}
Let $\phi:\Sigma\to\Sigma$ be a pseudo-Anosov map of a closed genus-2 surface $\Sigma$. Then there exists a branched double covering \[\pi:\Sigma \to S^2,\] branched along six   points $q_1,\dots,q_6\in S^2$, such that $\phi$ is a lift of a pseudo-Anosov map \[b:(S^2,q_1,\dots,q_6) \to (S^2,q_1,\dots,q_6)\] of the marked sphere, and the invariant  foliations of $\phi$ are lifts of those of $b$.
\end{lemma}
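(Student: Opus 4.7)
The plan is to use the hyperelliptic involution of $\Sigma$ together with the Birman--Hilden theorem. Let $\iota : \Sigma \to \Sigma$ be the hyperelliptic involution; then the quotient $\pi : \Sigma \to \Sigma/\iota \cong S^2$ is a double branched cover whose branch set $\{q_1,\dots,q_6\} \subset S^2$ consists of the images of the six Weierstrass points of $\Sigma$.

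First, I would replace $\phi$ with an $\iota$-equivariant representative. By the Birman--Hilden theorem, every element of $\mathrm{MCG}(\Sigma)$ commutes with the class $[\iota]$, and moreover admits a representative commuting with $\iota$ on the nose. Applied to $\phi$, this yields an isotopic homeomorphism $\phi' : \Sigma \to \Sigma$ with $\phi' \circ \iota = \iota \circ \phi'$; since being pseudo-Anosov is a mapping-class invariant, $\phi'$ is still pseudo-Anosov. The equivariance lets us descend to a homeomorphism $b : (S^2,\{q_i\}) \to (S^2,\{q_i\})$ satisfying $b \circ \pi = \pi \circ \phi'$, so $\phi'$ is a lift of $b$.

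Next, I would verify that $b$ is pseudo-Anosov by ruling out the remaining Nielsen--Thurston types on the marked sphere. If $b$ were isotopic (through maps of the marked sphere) to a periodic map $b_0$, then by Birman--Hilden the isotopy lifts $\iota$-equivariantly, so $\phi'$ would be isotopic to a lift of $b_0$; any such lift is finite order (some power is either $\mathrm{id}$ or $\iota$, hence a further power is trivial), contradicting that $\phi'$ is pseudo-Anosov. If $b$ were reducible with reducing multicurve $\gamma \subset S^2 \setminus \{q_i\}$, then the preimage $\pi^{-1}(\gamma) \subset \Sigma$ would be a $\phi'$-invariant multicurve of essential simple closed curves, again a contradiction.

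Finally, I would produce strictly $\iota$-invariant invariant foliations for $\phi'$. Uniqueness of pseudo-Anosov data up to isotopy in a mapping class shows that $\iota_* \mathcal{F}_{s,u}$ is isotopic to $\mathcal{F}_{s,u}$. To upgrade to literal invariance, I would exploit the hyperelliptic setting by choosing an $\iota$-invariant conformal structure on $\Sigma$ and realizing the pseudo-Anosov class via the associated Teichm\"uller quadratic differential; uniqueness forces this differential to be $\iota$-invariant, so its horizontal and vertical foliations are as well. These foliations descend to $b$-invariant measured foliations on $S^2$ whose local prong structure at each $q_i$ is determined by the branched cover formula arising from the local model $\pi(w) = w^2$. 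I expect the main obstacle to be this last step: producing strictly (rather than up-to-isotopy) $\iota$-invariant stable and unstable foliations and then verifying that the local analysis at the Weierstrass points yields valid pseudo-Anosov data on the marked sphere.
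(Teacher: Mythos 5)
Your overall strategy matches the paper's: pass to a representative commuting with the hyperelliptic involution, descend to the six-marked sphere, rule out the periodic and reducible cases for the quotient class, and then arrange the foliations. Two steps, however, are asserted rather than proved, and one of them hides the real work.

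First, in the reducible case you claim that $\pi^{-1}(\gamma)$ is a multicurve of \emph{essential} simple closed curves in $\Sigma$. This is not automatic: a curve that is essential in the $6$-punctured sphere could a priori have a preimage component bounding a disk in $\Sigma$ (the disk may contain Weierstrass points, which are invisible upstairs since $\Sigma$ is unmarked). The paper devotes an innermost-disk argument to this: if a component $c$ of $\pi^{-1}(c_i)$ bounded a disk $D$, one shows $D$ must be $\tau$-invariant (the cases $D\cap\tau(D)=\emptyset$ and $D\subsetneq\tau(D)$ are impossible), so $\pi|_D$ is a branched double cover of a disk, which forces $D$ to contain at most one branch point and hence $c_i$ to bound a once-punctured disk downstairs---contradicting essentiality. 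Without some version of this, your reducibility contradiction is incomplete.

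Second, your treatment of the foliations conflates ``$\phi'$ is in a pseudo-Anosov mapping class'' with ``$\phi'$ is literally a pseudo-Anosov homeomorphism,'' and the Teichm\"uller route you sketch would at best produce \emph{some} equivariant pseudo-Anosov representative $\psi$ with $\iota$-invariant foliations, not the given map $\phi$. You still need to transport the conclusion back to $\phi$: since $\phi$ and $\psi$ are pseudo-Anosov representatives of the same class, uniqueness gives $\phi = g\psi g^{-1}$ with $g$ isotopic to the identity, and one must then replace $\pi$ by $\pi g^{-1}$ so that $\phi$ itself is a lift with the stated foliations. The paper's route here is cleaner and avoids Teichm\"uller theory entirely: isotope $b_0$ to its pseudo-Anosov representative $b$ on the marked sphere, lift that isotopy (uniquely, starting from $\phi_0$) to get a genuine pseudo-Anosov map upstairs whose foliations are by construction lifts of those of $b$, and then perform the conjugation just described. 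I recommend adopting that argument for your final step, and in any case you must add the conjugation/relabeling to land on $\phi$ rather than on an isotopic map.
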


\begin{proof}
Let $\pi:\Sigma \to S^2$ be a branched double covering, branched along six points $q_1,\dots,q_6\in S^2$. Let $p_1,\dots,p_n$ be the preimages of these branch points, \[p_i:=\pi^{-1}(q_i).\] Let $\tau$ denote the covering involution of this map, also known as the hyperelliptic involution of $\Sigma$. It is well-known that the mapping class group of the closed genus-2 surface $\Sigma$ has nontrivial center, generated by the isotopy class of $\tau$ (see \cite[\S 3.4]{farb-margalit}, for example).\footnote{This is special to genus ($\leq$) $2$, as the mapping class groups of higher genus surfaces have trivial centers.} That is, every homeomorphism of $\Sigma$ commutes with $\tau$ up to isotopy.  In fact, the mapping class group of $\Sigma$ is generated by Dehn twists about simple closed curves which are invariant under $\tau$ \cite[Theorem 4.1]{farb-margalit}. Since each  Dehn twist in this generating set can be represented by a homeomorphism which commutes with $\tau$ (on the nose), it follows  that every homeomorphism of $\Sigma$ is isotopic to one which commutes with $\tau$ (on the nose).\footnote{This is \emph{a priori} stronger than the  statement that every homeomorphism commutes with $\tau$ up to isotopy.} In particular,  $\phi$ is isotopic to a map $\phi_0$ satisfying $\phi_0\tau=\tau\phi_0$. This commutativity implies that $\phi_0$ is in fact a map \[\phi_0:(\Sigma,p_1,\dots,p_6) \to (\Sigma,p_1,\dots,p_6)\] of the marked genus-2 surface, and is a lift of a map \[b_0:(S^2,q_1,\dots,q_6) \to (S^2,q_1,\dots,q_6)\] of the marked sphere, under the covering map $\pi$. 
We show below that since $\phi_0$ is isotopic to a pseudo-Anosov map, so is the restriction of $b_0$ to the punctured sphere $S^2\setminus \{q_1,\dots,q_6\}$.

Suppose this restriction is isotopic to a periodic map with period $n$. Then $\phi_0^n$ is isotopic to a lift of the identity map; hence, $\phi_0^{2n}$ is isotopic to the identity, a contradiction. 

Suppose instead that this restriction is isotopic to a reducible  map which fixes a collection $\{c_1,\dots,c_m\}$ of disjoint, essential, simple closed curves  in the punctured sphere. Then $\phi_0$ fixes, up to isotopy, the collection of simple closed curves arising as the components of the preimages $\pi^{-1}(c_1),\dots,\pi^{-1}(c_m)$. We claim  that each  component of $\pi^{-1}(c_i)$ is essential in $\Sigma$, for every $i$. Otherwise, there is a component $c$ of some $\pi^{-1}(c_i)$ which bounds a disk $D\subset \Sigma$. We may assume without loss of generality that $c$ is innermost in $D$ among components of $\pi^{-1}(c_i)$. If $D$ does not contain any $p_i$ then $c_i = \pi(c)$ bounds the disk $\pi(D)$ in the punctured sphere, which implies that $c_i$ is not essential, a contradiction. Suppose then that $D$ contains some $p_i$. Since $\tau(c)$ is either equal to or disjoint from $c$, and $c$ is innermost, either: \[ D\cap \tau(D) = \emptyset\, \textrm{ or } \,D\subsetneq \tau(D)\, \textrm{ or } \,D = \tau(D).\] We can rule out the first case  since $\tau(p_i)=p_i$. We can also rule out the second case, since by applying $\tau$ to both sides, we would have proper containment in the other direction, \[\tau(D)\subsetneq D, \] and hence $D \subsetneq D$, contradiction. In the third case, the disk $D$  is symmetric with respect to $\tau$.
The map  $\pi$ therefore restricts to a branched double covering  \[\pi:D\to \pi(D).\] It follows that $D$ contains only one $p_i$, since a disk is not the branched double cover of a disk along more than one point. Then $c_i=\pi(c)$ bounds the    once-punctured disk \[\pi(D)\cap (S^2\setminus\{q_1,\dots,q_6\}),\] which implies that $c_i$ is not essential in the punctured sphere, a contradiction. We conclude that all components of every multicurve $\pi^{-1}(c_i)$ are essential in $\Sigma$. Therefore,  $\phi_0$ fixes, up to isotopy, a collection of disjoint, essential, simple closed curves in $\Sigma$. That is,  $\phi_0$ is isotopic to a reducible map, a contradiction.

We have  shown that the restriction of $b_0$ to  $S^2\setminus \{q_1,\dots,q_6\}$ is isotopic to a pseudo-Anosov map. Let \[b:(S^2,q_1,\dots,q_6) \to (S^2,q_1,\dots,q_6)\] denote the extension of this pseudo-Anosov  map to the closed, marked sphere. Then $b$ is a pseudo-Anosov map isotopic to $b_0$. Let \[f_t:(S^2,q_1,\dots,q_6) \to (S^2,q_1,\dots,q_6)\] be an isotopy with $f_0=b_0$ and $f_1 = b$. There is a unique lift \[\tilde f_t:\Sigma \to \Sigma\] with respect to $\pi$ for which $\tilde f_0 = \phi_0$. By construction, $\tilde f_1$ is a pseudo-Anosov map, isotopic to $\phi_0$, whose invariant foliations are lifts of the invariant foliations of $b$. By the uniqueness of pseudo-Anosov representatives of isotopy classes \cite[Theorem 12.5]{flp}, there exists a diffeomorphism $g:\Sigma\to\Sigma$, isotopic to the identity, such that \[\phi = g\tilde f_1g^{-1}.\] If we relabel, letting $\pi$ denote the map $\pi g^{-1}$, then $\phi$ is a lift of $b$ under the map \[\pi:\Sigma \to S^2,\] such that the invariant  foliations of $\phi$ are lifts of those of $b$, as desired.
\end{proof}

\begin{proof}[Proof of Theorem \ref{thm:main2}] Suppose $K$ satisfies the hypotheses of the theorem. Then, by  Lemma \ref{lem:hfk-fibered-sqp}, $K$ is a genus-2, fibered, strongly quasipositive knot. By Lemma \ref{lem:hyperbolic}, $K$ is hyperbolic. By Lemma \ref{lem:prongs},  $h$ is then freely isotopic to a pseudo-Anosov map  $\psi$  whose  invariant foliations  have $n\geq 2$ prongs on $\partial S$.

Since the invariant  foliations of $\psi$  have more than one boundary prong,  $\psi$  extends to a pseudo-Anosov homeomorphism \[\hat\psi:\hat S\to \hat S\] of the closed genus-2 surface $\hat S$ obtained from $S$ by capping off its boundary with a disk, as discussed in \S\ref{sec:pA}. The invariant foliations for $\psi$ extend to invariant foliations for $\hat\psi$ in which the $n$ boundary prongs extend to an $n$-pronged singularity (or smooth point if $n=2$) $p$ in the disk, which is fixed by $\hat\psi$. It follows from Theorem \ref{thm:fixed-point} that $p$ is the only fixed point of $\hat\psi$.

By Lemma \ref{lem:pA-cover}, there exists a branched double covering \[\pi:\hat S\to S^2\] of the sphere along six  points $q_1,\dots,q_6$ such that $\hat\psi$ is the lift of a pseudo-Anosov map \[b:(S^2,q_1,\dots,q_6) \to (S^2,q_1,\dots,q_6)\] of the marked sphere. Let \[\tau:\hat S\to \hat S\] be the associated covering involution, and note that $\tau\circ \hat\psi = \hat\psi\circ \tau.$  Moreover, the fixed points of $\tau$ are precisely the preimages $p_1,\dots, p_6$, where  \[p_i:=\pi^{-1}(q_i).\] 
 We claim that the fixed point $p$ of $\hat\psi$ is one of these $p_i$; that is, \begin{equation}\label{eqn:taup}\tau(p)=p.\end{equation} To see this, we note that \[\hat\psi(\tau(p)) = \tau(\hat\psi(p)) = \tau(p).\] That is, $\tau(p)$ is also a fixed point of $\hat\psi$. Since $\hat\psi$ has only one fixed point,  \eqref{eqn:taup} follows.

Without loss of generality, let  us suppose  $p=p_6$. Then $\pi$ restricts to a branched double covering of  punctured surfaces \[\pi':\hat S\setminus \{p_6\} \to S^2 \setminus \{q_6\}.\] We will  view these punctured surfaces as the interiors of $S$ and $D^2$. Let us then extend $\pi'$ to a branched double covering between compact surfaces,  \[\pi':S \to D^2,\] branched  along the five marked points $q_1,\dots,q_5$. The extension of \[\hat\psi|_{\hat S\setminus \{p_6\}\cong \textrm{int}(S)}\] to $S$ is freely isotopic to $h$. It follows that $h$ is isotopic to the lift under $\pi'$ of a homeomorphism \[\beta:(D^2,q_1,\dots,q_5) \to (D^2,q_1,\dots,q_5) \] of the marked disk which is the identity on $\partial D^2$, where $\beta$ is freely isotopic to the extension of \[b|_{ S^2\setminus \{q_6\}\cong \textrm{int}(D^2)}\] to $D^2$.
In what follows, we will think of  $\beta$ as a homeomorphism of this marked disk and as a 5-braid, interchangeably.

This map specifies an open book decomposition $(D^2,\beta)$ of $S^3$ with unknotted binding $A$. In this open book decomposition, the points $q_1,\dots,q_5$ sweep out the closure \[B = \hat\beta\subset S^3\] of the 5-braid $\beta$, with axis $A$. 

The covering map $\pi'$ extends to a branched double covering from the open book decomposition specified by $(S,h)$ to the open book decomposition specified by $(D^2,\beta)$, in which the branch set is the braid closure $B$. Precisely, this extension is defined by \begin{equation}\label{eqn:bdc}\pi'\times \id: M_{h} \to M_{\beta},\end{equation}  where $M_h\cong S^3$ is the manifold associated to the open book $(S,h)$, given by \[M_{h}: = (S\times[0,1])/\sim,\] where $\sim$ is the relation defined by \begin{align*}
(x,0)\sim (h(x),1)& \textrm{ for  }x\in S\\
(x,t) \sim (x,s)& \textrm{ for  }x\in \partial S \textrm { and }s,t\in[0,1],
\end{align*} 
and likewise for $M_\beta \cong S^3$.
 Since  \[M_{h}\cong \Sigma(S^3,B)\cong S^3,\]  it follows that $B$ is an unknot \cite{waldhausen-involution}. Finally, the  binding \[K = (\partial S\times\{0\})/\sim\] of $(S,h)$ is the lift of the binding  \[A = (\partial D^2\times\{0\})/\sim\] of $(D^2,\beta)$ (and braid axis of $\beta$) under the branched double covering \eqref{eqn:bdc}, completing the proof of Theorem \ref{thm:main2}.
\end{proof}

Although we will not need it for the proof of Theorem \ref{thm:main}, we can say a bit more about the monodromy of knots  which satisfy (something slightly weaker than) the hypotheses of Theorem \ref{thm:fixed-point}:

\begin{proposition}
\label{prop:prong-structure}
Suppose $K\subset S^3$ is a hyperbolic, fibered, strongly quasipositive knot with Alexander polynomial \[\Delta_K(t) = t^2-t+1-t^{-1}+t^{-2}.\] Let $(S,h)$ be an associated open book, and let \[\psi:S\to S\] be a pseudo-Anosov representative of $h$. If $\psi$ has no  fixed points then the stable foliation of $\psi$ has 4 prongs on $\partial S$, and two interior 3-pronged singularities which are swapped by $\psi$. Furthermore, $h$ has fractional Dehn twist coefficient $c(h)=1/4$.
\end{proposition}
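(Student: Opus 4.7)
My plan is to combine Euler--Poincar\'e on $S$ with a Lefschetz analysis of the closed-surface extension $\hat\psi\colon\hat S\to\hat S$ driven by the Alexander polynomial, together with the hyperelliptic lifting structure of Lemma~\ref{lem:pA-cover}. Since $\chi(S)=-3$, the Euler--Poincar\'e identity $\sum_{x}(k_x-2)+n_\partial = 6$ for the interior singularities $x$ of prong type $k_x$, with $n_\partial\geq 2$ the number of boundary prongs, combined with Lemma~\ref{lem:prongs} giving $c(h)=1/n_\partial$ and the hypothesis that $\psi$ has no fixed points (so every orbit of interior singularities under $\psi$ has size $\geq 2$), leaves exactly five candidate structures: (A) $n_\partial=2$ with four interior $3$-prongs (orbits of type $2{+}2$ or $4$); (B) $n_\partial=2$ with two interior $4$-prongs swapped; (C) $n_\partial=3$ with three interior $3$-prongs in a single $3$-orbit; (D) the target $n_\partial=4$ with two interior $3$-prongs swapped; and (E) $n_\partial=6$ with no interior singularities.

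Since $K$ is fibered, $h_*$ on $H_1(S;\Q)\cong\Q^4$ has characteristic polynomial $t^2\Delta_K(t)=\Phi_{10}(t)$, so $h_*$ has order $10$ and $L(\hat\psi^k)=2-\tr(h_*^k)$ takes the values $1,3,1,3,6,3,1,3,1,-2$ for $k=1,\ldots,10$. In each candidate, the fixed points of $\hat\psi^k$ decompose into: the capping point $p$ (whose index under $\hat\psi^k$ is determined by the rotation of its $n_\partial$ prongs by $2\pi k/n_\partial$, giving $+1$ when $n_\partial\nmid k$ and $1-n_\partial$ when $n_\partial\mid k$, with the analogous $\pm 1$ when $n_\partial=2$); fixed interior singularities of $\psi^k$ (each contributing $1-k_x$ or $+1$ according to whether the induced prong permutation is trivial); and regular $d$-periodic orbits of $\psi$ in $S^\circ$ with $d\mid k$ (each contributing $\pm d$ depending on the sign of the eigenvalues of $(D\psi)^d$ along the orbit). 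Matching these against $L(\hat\psi^k)$ yields a system of linear equations for each candidate.

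Case (C) is eliminated cleanly: after relabeling the cover, $\hat\psi$ literally commutes with the hyperelliptic involution $\tau$ from Lemma~\ref{lem:pA-cover}, so $\tau(p)$ is another fixed point of $\hat\psi$. Because the branched cover formula gives a $2k$-prong upstairs at any branch point projecting to a $k$-prong, the $3$-prong $p$ is not a branch point, so $\tau(p)\neq p$ is also a $3$-prong singularity; but in case (C) the four $3$-prongs of $\hat\psi$ consist of $p\in D$ together with three in $S^\circ$, so $\tau(p)\in D$ would violate the count of four, while $\tau(p)\in S^\circ$ would produce a fixed interior singularity of $\psi$, contradicting the hypothesis, and $\tau(p)\in\partial S$ is ruled out since $\psi|_{\partial S}$ is a nontrivial rotation. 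Cases (A), (B), and (E) are ruled out by solving the Lefschetz equations for $k\in\{2,4,5,6,8,10\}$: in each such case, after using the lower-iterate equations to pin down the orientation signs of periodic orbits, the equation at some $k$ forces a negative or non-integer count of $d$-periodic orbits for some $d$. Only case (D) survives, and the fractional Dehn twist coefficient $c(h)=1/n_\partial=1/4$ then follows from Lemma~\ref{lem:prongs}.

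The main obstacle lies in the Lefschetz bookkeeping for cases (A), (B), (E): because orientation-reversing periodic orbits allow both signs for their contributions, several iterates need to be combined to pin down the signed orbit counts. The periodicity and small magnitude of $L(\hat\psi^k)$ coming from the cyclotomic structure of $\Phi_{10}$ makes the required integrality and non-negativity constraints easy to check, and the hyperelliptic symmetry from Lemma~\ref{lem:pA-cover} provides the key extra rigidity to eliminate case (C), which is otherwise compatible with the Lefschetz data alone.
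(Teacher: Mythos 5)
Your reduction to the five candidate prong structures is correct (the Euler--Poincar\'e normalization $\sum_x(k_x-2)+n_\partial=6$ matches the paper's $\sum_i(2-a_i)=2\chi(\hat S)=-4$ after capping), your Lefschetz numbers $L(\hat\psi^k)=2-\tr(h_*^k)$ computed from $\Phi_{10}$ are right, and your elimination of case (C) via the hyperelliptic involution is essentially the paper's argument: $\tau(p)$ is a fixed point of $\hat\psi$, hence equals $p$, hence $p$ is a branch point and must have an \emph{even} number of prongs. But the claimed Lefschetz elimination of cases (A), (B), and (E) does not go through. For a pseudo-Anosov map a regular period-$d$ orbit contributes $d\cdot(-P_d^+ + (-1)^{(k/d)+1}P_d^-)$ to $L(\hat\psi^k)$, where $P_d^{\pm}$ count orbits according to the sign of the eigenvalues of $D\hat\psi^d$; these are fresh non-negative unknowns at every period, only their differences (and certain parities) are constrained, and new periods keep appearing. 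Working through case (E), for instance: $k=1$ gives $1=1$; $k=2$ gives $P_2^--P_2^+=1$; $k=5$ gives $P_5^--P_5^+=1$; $k=6$ gives $-P_3^++P_6^--P_6^+=1$; $k=10$ gives $P_5^++P_5^-$ odd --- all simultaneously satisfiable with non-negative integers. Cases (A) and (B) are similarly consistent at every iterate. The bounded, period-$10$ Lefschetz sequence simply cannot see the exponential growth of periodic orbits, because the indices are allowed to cancel, and they do cancel exactly when the invariant foliations are non-orientable or their orientations are not preserved.

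The paper closes these cases with two inputs you are missing. First, $n_\partial=2$ (your cases (A) and (B)) is excluded topologically: the quotient braid $\beta$ has fractional Dehn twist coefficient $c(\beta)=2/n_\partial$, and Ito--Kawamuro's bound $|c(\beta)|<1$ for a pseudo-Anosov braid with unknotted closure forces $n_\partial>2$. Second, the single-$6$-prong case (E) is excluded by Masur--Smillie's theorem that on a genus-$2$ surface the stratum with one $6$-pronged singularity carries only \emph{orientable} foliations; Thurston's theorem then makes the dilatation $\lambda>1$ a real eigenvalue of $\hat\psi_*$ on $H_1$, i.e.\ a real root of the cyclotomic polynomial $\Phi_{10}$, which has none. (This orientability statement is exactly the extra rigidity that a Lefschetz count cannot supply: once the foliations are oriented and preserved, all regular indices are $-1$ and $L(\hat\psi^k)\to-\infty$, which is what actually contradicts the bounded cyclotomic trace sequence.) With those two ingredients in place, only your case (D) survives, and the rest of your argument --- the swap of the two $3$-prongs forced by the no-fixed-point hypothesis, and $c(h)=1/4$ from Lemma~\ref{lem:prongs} --- is fine.
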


\begin{proof}
Since $K$ is fibered, the Alexander polynomial tells us that $K$ has genus 2. By Lemma \ref{lem:prongs}, the  stable foliation of $\psi$ has $n\geq 2$ prongs, and $c(h)=1/n$. As in the proof of Theorem \ref{thm:main2}, we  conclude that  $\psi$  extends to a pseudo-Anosov homeomorphism \[\hat\psi:\hat S\to \hat S\] of the closed genus-2 surface  obtained  by capping off  $\partial S$ with a disk, such that the invariant foliations for $\psi$ extend to  invariant foliations for $\hat\psi$ with an $n$-pronged singularity (or smooth point if $n=2$) $p$  in the disk, which is fixed by $\hat\psi$. Since $\psi$ has no fixed points, $p$ is the only fixed point of $\hat\psi$.

By Lemma \ref{lem:pA-cover}, there exists a branched double covering \[\pi:\hat S\to S^2\] of the sphere along six  points $q_1,\dots,q_6$ such that $\hat\psi$ is the lift of a pseudo-Anosov map \[b:(S^2,q_1,\dots,q_6) \to (S^2,q_1,\dots,q_6)\] of the marked sphere whose invariant foliations lift to those of $\hat\psi$. As in the proof of Theorem \ref{thm:main2},  we know that $\pi(p)$ is one of the branch points. It follows that the number $n$ of prongs at $p$ must be even.

As in the proof of Theorem \ref{thm:main2}, there is a 5-braid $\beta$ with unknotted closure $B = \hat\beta$ such that $K$ is the lift of the braid axis in the branched double cover $\Sigma(S^3,B) \cong S^3$. Since the open book $(S,h)$ associated to $K$ has fractional Dehn twist coefficient $c(h) = 1/n$, the braid $\beta$ has fractional Dehn twist coefficient $c(\beta) = 2/n$. Since the closure of $\beta$ is an unknot, we have that \[ |c(\beta)| < 1, \] by work of Ito and Kawamuro
\cite[Corollary~7.13(2a)]{ito-kawamuro-obf}. It follows that $n>2$. In particular, $p$ is a singular point of the invariant foliations of $\hat\psi$.

Let  $p = s_1, \dots s_k$ be the singular points of the invariant foliations of $\hat\psi$.  Suppose $s_i$ is an $a_i$-pronged singularity,  where $a_i \geq 3$ for each $i$. Then
\[ \sum_{i=1}^k (2-a_i) = 2\chi(\hat S) = -4 \]
by the Euler--Poincar\'e formula \cite[Proposition~5.1]{flp}. Since $a_1=n$ is an even integer greater than $2$, it follows that, up to permutations of $a_2,\dots,a_k$, the tuple $(a_1,\dots,a_k)$ must be one of: \begin{equation}\label{eqn:possible-prongs}(6) \textrm{ or } (4,4) \textrm{ or } (4,3,3).\end{equation}
In the first case, Masur and Smillie proved \cite[Theorem~2]{masur-smillie} that the invariant foliations of $\hat\psi$ must be orientable. The dilatation $\lambda>1$ of $\hat\psi$ is then the largest absolute value of a real eigenvalue of the map \[\hat\psi_*:H_1(\hat S) \to H_1(\hat S),\] by Thurston 
\cite{thurston-diffeomorphisms} (see \cite[Theorem~2.2]{lanneau-thiffeault}). Equivalently, $\lambda$ is the largest absolute value of a real root of the characteristic polynomial of $\psi_* = h_*$, which is the Alexander polynomial of $K$, \[\Delta_K(t) = t^2-t+1-t^{-1}+t^{-2}.\] But this is a cyclotomic polynomial, so it does not have any real roots, and we obtain a contradiction. This rules out the possibility $(6)$ in \eqref{eqn:possible-prongs}.

In the second case, since $\hat\psi$ fixes $p=s_1$ it must also fix  $s_2$. But then  $s_2$ would also  be a fixed point of $\psi$, which is a contradiction since we are assuming that $\psi$ has no fixed points. This rules out the possibility $(4,4)$ in \eqref{eqn:possible-prongs}.

We conclude that the third case, $(4,3,3)$, must reflect the prong structure for the invariant foliations of  $\hat\psi$. In particular, $n=4$, so that $c(h)=1/4$, and  the stable  foliation of $\psi$ has two 3-pronged singularities. The map $\psi$ must either swap these singularities or fix them. Since $\psi$ has no fixed points, we conclude that $\psi$ swaps them. This completes the proof.
\end{proof}

\begin{remark}It is  worth comparing Proposition \ref{prop:prong-structure} with Misev's work in  \cite{misev2017families}, in which he exhibits infinitely many genus-2, fibered, strongly quasipositive knots  with the same Alexander polynomial as $T(2,5)$ and pseudo-Anosov monodromies. The corresponding  invariant  foliations have the  prong structure    described in Proposition \ref{prop:prong-structure}, but the  3-pronged singularities are fixed rather than swapped, and the monodromies  have  regular fixed points.
\end{remark}

\begin{remark}
One  possible approach to proving that knot Floer homology detects $T(2,5)$ is to show that there are no  pseudo-Anosov maps $\psi$ satisfying the hypotheses and conclusions of Proposition \ref{prop:prong-structure}. Or, if such maps  exist, there are finitely many of them, whose knot Floer homology groups we then check by hand (or computer).
\end{remark}

\section{Exchangeable braids and computations}\label{sec:computations}

The  goal of this section is to  describe a computer-assisted proof of Theorem \ref{thm:fakeT25} below, which will  be used in the proof of Theorem \ref{thm:main} in the next section. 

A braid $\beta$ is said to be \emph{exchangeable} if its closure $B=\hat\beta$ is unknotted and the axis $A$ of $\beta$ is also braided with respect to $B$; in other words, if $A$ and $B$ are mutually braided unknots. If the braid index of $\beta$ is odd, then the braid axis $A$ lifts to a knot $K$ in the branched double cover $\Sigma(S^3,B)\cong S^3$.

\begin{theorem} \label{thm:fakeT25}
Let $\beta$ be a pseudo-Anosov 5-braid with unknotted closure.  Let $K$ be the lift of the braid axis in the branched double cover $\Sigma(S^3,\hat\beta)\cong S^3$.  If $K$ is strongly quasipositive with Alexander polynomial \[\Delta_K(t) = t^2-t+1-t^{-1}+t^{-2},\] then $\beta$ is not exchangeable.
\end{theorem}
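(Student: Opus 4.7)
The strategy is to assume for contradiction that $\beta$ is exchangeable, apply Morton's theorem \cite{morton} to restrict $\beta$ to a combinatorial family amenable to enumeration, and then rule out every candidate by computer.

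If $\beta$ is exchangeable, then $A$ and $B=\hat\beta$ are mutually braided unknots, so by Morton's theorem $\beta$ may be assumed, after conjugation, to be a \emph{Stallings 5-braid}: a 5-braid built up from the identity by iterated Hopf-band plumbings. This yields a recursive combinatorial description of the braids to be considered. The hypothesis that $K$ has genus-$2$ Alexander polynomial $t^2-t+1-t^{-1}+t^{-2}$ should bound the complexity of the plumbing sequence (equivalently, the word length of $\beta$ in $B_5$), rendering the list of candidates finite.

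I would then run the following checks in Sage \cite{sagemath} on each candidate braid:
\begin{enumerate}
\item Determine the Nielsen--Thurston type of $\beta$ via a Bestvina--Handel-style algorithm and discard all periodic and reducible braids.
\item For the remaining pseudo-Anosov braids, compute $\Delta_K(t)$, where $K\subset \Sigma(S^3,\hat\beta)\cong S^3$ is the lift of the axis. Since $K$ is the binding of the open book lifted from $(D^2,\beta)$ through the branched double covering, the polynomial $\Delta_K(t)$ equals (up to units) the characteristic polynomial of the reduced Burau representation of $\beta$ specialized at $t=-1$, which Sage can compute directly. Discard any candidate with $\Delta_K(t)\neq t^2-t+1-t^{-1}+t^{-2}$.
\item For any braid surviving step $2$, rule out strong quasipositivity of the associated $K$ by computing $\tau(K)$ and checking that $\tau(K)<g(K)=2$, or by invoking another obstruction such as the sign of the Conway coefficients or the Ozsv{\'a}th--Szab{\'o} $\Upsilon$-invariant.
\end{enumerate}

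The main obstacle will be the finiteness step: Stallings braids are \emph{a priori} arbitrarily complicated, so one must use the constraint $\deg\Delta_K=4$ (together perhaps with pseudo-Anosovity) to derive an effective bound on the number of plumbings and hence on the length of the braid words to be enumerated. Once that bound is in place, each check above is routine in Sage --- Bestvina--Handel, Burau specialization, and characteristic polynomial of an integer matrix are all standard --- so the remaining work is a bounded case analysis confirming that no pseudo-Anosov Stallings 5-braid produces a strongly quasipositive $K$ with the prescribed Alexander polynomial.
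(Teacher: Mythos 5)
Your overall strategy (Morton reduction to Stallings braids, then a finite computer search) is the right one, but there are two genuine gaps, and the second is fatal to the proposal as written.

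First, the finiteness step that you flag as ``the main obstacle'' does not come from the degree of $\Delta_K$, and I do not see how it could: it comes directly from the precise form of Morton's theorem together with the strong quasipositivity hypothesis. Morton \cite[Corollary~1.1]{morton} shows that an exchangeable $n$-braid is conjugate to a product of exactly $n-1$ band generators $\sigma_{i_1j_1}^{\epsilon_1}\cdots\sigma_{i_{n-1}j_{n-1}}^{\epsilon_{n-1}}$ with $\epsilon_k=\pm1$; for $n=5$ this is already a finite family. Moreover, strong quasipositivity of $K$ is used \emph{at this stage}, not as a final filter: it forces the contact structure on the branched double cover to be tight, so the transverse unknot $\hat\beta$ cannot be a transverse stabilization, hence has self-linking number $-1$, which forces every $\epsilon_k=+1$ (see Lemma~\ref{lem:stallings-form}). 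This leaves exactly $10^4$ words $\sigma_{i_1j_1}\cdots\sigma_{i_4j_4}$ to check. Without this input you have neither a bound on the word length nor control of the signs.

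Second, and more seriously, your expected endgame fails: after filtering the $10{,}000$ candidates by pseudo-Anosovicity and by the characteristic polynomial of the reduced Burau matrix at $t=-1$, the search does \emph{not} come up empty. Twenty braids (five up to cyclic permutation) survive both tests. Your fallback---discarding survivors by showing $\tau(K)<2$ or otherwise obstructing strong quasipositivity of the lifted axis---is not known to succeed for these five braids and is not something you have computed; the theorem cannot be closed that way. The paper instead extracts one more consequence of exchangeability (Lemma~\ref{lem:fibered-lift}, going back to Goldsmith \cite{goldsmith}): if $\beta$ is exchangeable then $\widehat{\beta^k}$ is a fibered link for every $k\geq 1$, because the disk fibration of the axis complement lifts to the cyclic branched covers. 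For each of the five surviving braids, $\Delta_{\widehat{\beta^4}}(t)$ has leading coefficient $5$, hence is not monic, so $\widehat{\beta^4}$ is not fibered and $\beta$ is not exchangeable. Some such additional obstruction to exchangeability is an essential missing ingredient in your argument.
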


We begin with the following preliminary lemmas.

\begin{lemma} \label{lem:stallings-form}
Let $\beta$ be an exchangeable $n$-braid for some odd $n$. Suppose  that the lift of the braid axis in the branched double cover $\Sigma(S^3,\hat\beta)\cong S^3$ is strongly quasipositive. Then $\beta$ is conjugate to a braid of the form
\[ \beta' = \sigma_{i_1j_1} \cdot \sigma_{i_2j_2} \cdot \ldots \cdot \sigma_{i_{n-1}j_{n-1}}, \]
where
\[ \sigma_{ij} = (\sigma_{j-1}\sigma_{j-2}\cdots\sigma_{i+1}) \cdot \sigma_i \cdot (\sigma_{j-1}\sigma_{j-2}\cdots\sigma_{i+1})^{-1} \]
for $1 \leq i < j \leq n$.
\end{lemma}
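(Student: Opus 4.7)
The plan is to pass to the branched double cover along $\hat\beta$, interpret $\tilde\beta$ as the monodromy of the fibered knot $K$, and read off the desired positive factorization of $\beta$ from a positive factorization of $\tilde\beta$ coming from strong quasipositivity of $K$.

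First I would set up the branched cover. The braid $\beta$ presents $S^3$ as an open book with binding $A$ and page $D^2$, on which $\hat\beta$ sits as a closed $n$-braid meeting each page in $n$ marked points $p_1,\dots,p_n$. Since $\hat\beta$ is unknotted we have $\Sigma(S^3,\hat\beta)\cong S^3$, and the double cover pulls back to an open book with binding $K$ (the lift of $A$) and page $\Sigma$, the branched double cover of $D^2$ over $\{p_1,\dots,p_n\}$; as $n$ is odd, $\Sigma$ is a connected surface of genus $(n-1)/2$ with one boundary component. Its covering involution $\tau$ commutes with the monodromy $\tilde\beta$, which is the hyperelliptic lift of $\beta$ under the natural embedding of $\mathrm{MCG}(D^2,\{p_1,\dots,p_n\})$ into $\mathrm{MCG}(\Sigma)$.

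Next I would use strong quasipositivity of $K$ to obtain a positive factorization of $\tilde\beta$. The knot $K$ is fibered with fiber $\Sigma$ and monodromy $\tilde\beta$, so by a result combining the work of Rudolph on quasipositive surfaces with Hedden's characterization of strongly quasipositive fibered knots (via $\tau(K)=g(K)$), the fiber surface $\Sigma$ is built from a disk by iteratively plumbing positive Hopf bands. Equivalently, $\tilde\beta$ factors as a product of right-handed Dehn twists $T_{\gamma_1}\cdots T_{\gamma_k}$ in $\mathrm{MCG}(\Sigma)$, one per Hopf band, with the count forced by Euler characteristic to be $k=1-\chi(\Sigma)=n-1$.

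The main obstacle is to arrange the curves $\gamma_i$ to be $\tau$-invariant. I would address this by exploiting that the fiber surface of a fibered knot is unique up to ambient isotopy in $S^3$, and that our $\Sigma$ already sits as a $\tau$-invariant surface in $S^3$ via the branched cover structure. One can then realize the positive Hopf-plumbing construction equivariantly: at each stage the destabilization arc lives in a $\tau$-invariant strongly quasipositive fiber surface, and can be replaced by a $\tau$-symmetric representative. This equivariance is the delicate point because a priori the positive factorization guaranteed by Hedden's theorem need not respect $\tau$.

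Finally I would descend the equivariant factorization to $\beta$. Any $\tau$-invariant non-separating simple closed curve on $\Sigma$ arising from a Hopf band projects to a simple arc in $D^2$ connecting two marked points $p_{i_k}$ and $p_{j_k}$, and the Dehn twist about it is precisely the lift of the band generator $\sigma_{i_k j_k}$. Descending the equivariant factorization $\tilde\beta=T_{\gamma_1}\cdots T_{\gamma_{n-1}}$ through the hyperelliptic extension then expresses $\beta$ as a conjugate of $\sigma_{i_1 j_1}\cdots\sigma_{i_{n-1}j_{n-1}}$, as required; the conjugation accommodates the choice of identification of the abstract fiber with the model surface $\Sigma$.
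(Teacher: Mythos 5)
There is a genuine gap, and it occurs at the crux of your argument. In your second step you assert that because $K$ is fibered and strongly quasipositive, its fiber surface is obtained from a disk by plumbing positive Hopf bands, equivalently that the monodromy $\tilde\beta$ factors as a product of right-handed Dehn twists. This is not a theorem: Hedden's characterization (via $\tau(K)=g(K)$) gives only that the open book $(\Sigma,\tilde\beta)$ supports the tight contact structure on $S^3$, and it is a well-known open problem whether every such open book (or every fibered strongly quasipositive knot) arises by positive Hopf plumbing. What is known (Giroux--Goodman, Loi--Piergallini) is that some \emph{positive stabilization} of the open book has monodromy a product of positive Dehn twists, but stabilizing changes the page and destroys the count $k=1-\chi(\Sigma)=n-1$ that you need. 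The subsequent equivariance step --- arranging the twisting curves to be $\tau$-invariant so that they descend to band generators --- is also only asserted, not proved, and is itself nontrivial. Finally, note that your argument never uses the hypothesis that $A$ is braided about $\hat\beta$ (only that $\hat\beta$ is unknotted); since a braid conjugate to a positive Stallings product is automatically exchangeable, a proof that ignores this hypothesis would establish a much stronger statement than the lemma, which is a strong signal that something must be missing.

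For comparison, the paper's proof runs in the opposite direction and keeps the two hypotheses separate. Exchangeability is used first, via Morton's theorem, to conclude that $\beta$ is conjugate to a \emph{signed} Stallings product $\sigma_{i_1j_1}^{\epsilon_1}\cdots\sigma_{i_{n-1}j_{n-1}}^{\epsilon_{n-1}}$; this already fixes the number of bands at $n-1$. Strong quasipositivity of $K$ is then used only to determine the signs: the closure is a transverse unknot $\cT$ with $\lsl(\cT)=\bigl(\sum_k\epsilon_k\bigr)-n$, the branched double cover of the standard tight $S^3$ along $\cT$ is supported by the open book with binding $K$ and hence is tight, so by Plamenevskaya's result $\cT$ is not a transverse stabilization, forcing $\lsl(\cT)=-1$ and therefore $\epsilon_k=+1$ for all $k$. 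If you want to salvage your approach, you would still need Morton's theorem (or a substitute) to get the band presentation with the correct band count, at which point the contact-geometric sign argument is the efficient way to finish.
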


\begin{proof}
Morton \cite[Corollary~1.1]{morton} proved that every exchangeable $n$-braid is conjugate to a \emph{Stallings $n$-braid}, which is a product of the form
\[ \beta' = \sigma_{i_1j_1}^{\epsilon_1} \cdot \sigma_{i_2j_2}^{\epsilon_2} \cdot \ldots \cdot \sigma_{i_{n-1}j_{n-1}}^{\epsilon_{n-1}} \]
for some exponents $\epsilon_k = \pm 1$.   Then the closure $\hat\beta' \cong \hat\beta$ is smoothly unknotted, and represents a transverse knot $\cT$ with self-linking number $\lsl(\cT) = \left(\sum_k \epsilon_k\right) - n$.

Let $K$ be the lift of the braid axis of $\beta$ in the branched double cover $\Sigma(S^3,\hat\beta)\cong S^3$. Note that $K$ is a knot since $n$ is odd. Moreover, $K$   is the binding of an open book supporting  the contact manifold $(S^3,\xi)$ obtained as the branched double cover of the tight  $S^3$ branched along the transverse unknot $\cT$. Note that $\xi$ must be tight since $K$ is strongly quasipositive \cite{hedden-positivity,baader-ishikawa}. This implies that $\cT$ cannot be a stabilization of a transverse unknot \cite[Proposition~1.3]{plamenevskaya-branched}.  It follows that $\lsl(\cT)=-1$, and so all of the exponents $\epsilon_k$ must be $+1$.
\end{proof}

The next lemma appears as a remark in \cite[p.\ 88]{morton}, though the underlying construction goes back to Goldsmith \cite{goldsmith}.

\begin{lemma} \label{lem:fibered-lift}
If $\beta$ is an exchangeable $n$-braid, then the closure of $\beta^k$ is a fibered link for all integers $k \geq 1$.
\end{lemma}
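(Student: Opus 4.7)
The plan is to exhibit $\widehat{\beta^k}$ as the binding of an open book decomposition of $S^3$, obtained by pulling back the standard disk open book of $B = \hat\beta$ through the $k$-fold cyclic cover of $S^3$ branched along the braid axis $A$.

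First, since $B$ is unknotted, $S^3$ admits an open book decomposition $(D^2, \mathrm{id})$ with binding $B$ and disk pages $\{D_\theta\}_{\theta\in S^1}$. The exchangeability hypothesis says precisely that $A$ is a closed braid with respect to $B$, which geometrically means that $A$ is transverse to every page $D_\theta$ and meets each page in some fixed number $m$ of points.

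Next, I would consider the $k$-fold cyclic cover $p\colon \tilde S^3 \to S^3$ branched along $A$. Because $A$ is an unknot, $\tilde S^3 \cong S^3$. The key identification is that $p^{-1}(B) = \widehat{\beta^k}$: in the complement $S^3 \setminus A \cong D^2 \times S^1$, the link $B$ is the closure $\hat\beta$, and the $k$-fold cyclic cover of $D^2 \times S^1$ that wraps the $S^1$-factor $k$ times is again $D^2 \times S^1$; the preimage of $\hat\beta$ under this cover is tautologically $\widehat{\beta^k}$, since lifting the braid $\beta$ through $k$ iterations of the base monodromy produces $\beta^k$. Extending across the branch set $A$, we conclude $p^{-1}(B) = \widehat{\beta^k}$ in $\tilde S^3$.

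Finally, I would push the open book $(D^2, \mathrm{id})$ up through $p$. Each page $D_\theta$ lifts to the surface $\tilde D_\theta := p^{-1}(D_\theta)$, which is a $k$-fold cyclic cover of $D_\theta$ branched at its $m$ intersection points with $A$. Since $A$ is transverse to the pages, the branching is compatible with the fibration, so the $\tilde D_\theta$ assemble into an honest open book decomposition of $\tilde S^3 \cong S^3$ with binding $p^{-1}(B) = \widehat{\beta^k}$, exhibiting $\widehat{\beta^k}$ as a fibered link. The step requiring the most care is verifying that the lifted pages genuinely foliate the complement of the binding, including checking the behavior along the branch locus $A$; this is essentially the content of Goldsmith's construction referenced in the lemma.
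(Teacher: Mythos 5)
Your proof is correct and takes essentially the same approach as the paper's: both fiber the complement of $B$ by disks transverse to $A$ (this is where exchangeability enters), pass to the $k$-fold cyclic cover of $S^3$ branched along the unknotted axis $A$, identify the preimage of $B$ with $\widehat{\beta^k}$, and observe that the lifted pages fiber its complement. This is exactly Goldsmith's construction, which is also what the paper cites.
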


\begin{proof}
Let $B = \hat\beta$ be the closure of $\beta$, with braid axis $A$.  Since $A$ is also braided about $B$, the complement of $B$ in $S^3$ is fibered by disks which each meet $A$ transversely at $n$ points.  If $\Sigma_k(S^3,A) \cong S^3$ is the branched $k$-fold cyclic cover of $A$, then these disks lift to surfaces which fiber the complement of the lift $\tilde{B}$.  But since $A$ is the braid axis of $\hat\beta$, the lifted link $\tilde{B} \subset \Sigma_k(S^3,A)$ is simply the closure of the braid $\beta^k$, so $\widehat{\beta^k}$ is fibered, as claimed.
\end{proof}

\begin{proof}[Proof of Theorem~\ref{thm:fakeT25}]
We apply Lemma~\ref{lem:stallings-form} in the case $n=5$. There are ten generators $\sigma_{ij}$ with $1 \leq i < j \leq 5$. Up to conjugacy, $\beta$ is a product of four such generators, according to Lemma \ref{lem:stallings-form}, so there are $10,000$ braids to check (in fact, many of these are conjugate to one another, so it is not strictly necessary to check all of  them).

We examine each of these 10,000 braids using Sage \cite{sagemath}, which can determine whether any one of these braids, such as
\[ \sigma_{13}\sigma_{24}\sigma_{14}\sigma_{25} = \sigma_2\sigma_1\sigma_2^{-1}\sigma_3\sigma_2^2\sigma_1\sigma_2^{-1}\sigma_3^{-1}\sigma_4\sigma_3\sigma_2\sigma_3^{-1}\sigma_4^{-1}, \]
is pseudo-Anosov:
\begin{verbatim}
sage: BG = BraidGroup(5)
sage: b = BG([2, 1, -2, 3, 2, 2, 1, -2, -3, 4, 3, 2, -3, -4])
sage: b.is_pseudoanosov()
True
\end{verbatim}

Since $K$ is the lift of the braid axis in the branched double cover $\Sigma(S^3,\hat\beta)$, its Alexander polynomial $\Delta_{K}(t)$ is the characteristic polynomial of the reduced Burau representation at $t=-1$, which has the form
\[ \rho: B_5 \to GL_4(\Z[t,t^{-1}]) \xrightarrow{t\mapsto -1} GL_4(\Z). \]
Indeed, let us view $\beta$ as a homeomorphism of the disk with five marked points. The branched double cover of this disk branched along the marked points is a genus-2 surface $S$ with one boundary component, and $\beta$ lifts to a homeomorphism \[h:S\to S\] such that $(S,h)$ is an open book corresponding to the fibered knot $K$. We know that $\Delta_{{K}}(t)$ is the characteristic polynomial of the monodromy action \[h_*:H_1(S) \to H_1(S),\] and $\rho$ computes precisely this action \cite[Proposition~2.1]{gambaudo-ghys}.  We can therefore use Sage to determine the braids $\beta$ for which  $\Delta_K(t) = t^2-t+1-t^{-1}+t^{-2}$  by:
\begin{verbatim}
sage: b.burau_matrix(reduced=True).subs(t=-1).characteristic_polynomial()
x^4 - x^3 + x^2 - x + 1
\end{verbatim}

There are exactly 20 braids out of the possible $10,000$ which are pseudo-Anosov and for which the lifted braid axis has this Alexander polynomial.  In the notation of Lemma~\ref{lem:stallings-form}, these are:
\[ \begin{array}{cccc}
\sigma_{13}\sigma_{24}\sigma_{14}\sigma_{25}, &
\sigma_{13}\sigma_{24}\sigma_{35}\sigma_{14}, &
\sigma_{13}\sigma_{24}\sigma_{35}\sigma_{25}, &
\sigma_{13}\sigma_{35}\sigma_{14}\sigma_{25}, \\

\sigma_{14}\sigma_{13}\sigma_{24}\sigma_{35}, &
\sigma_{14}\sigma_{25}\sigma_{13}\sigma_{24}, &
\sigma_{14}\sigma_{25}\sigma_{13}\sigma_{35}, &
\sigma_{14}\sigma_{25}\sigma_{24}\sigma_{35}, \\

\sigma_{24}\sigma_{14}\sigma_{25}\sigma_{13}, &
\sigma_{24}\sigma_{35}\sigma_{14}\sigma_{13}, &
\sigma_{24}\sigma_{35}\sigma_{14}\sigma_{25}, &
\sigma_{24}\sigma_{35}\sigma_{25}\sigma_{13}, \\

\sigma_{25}\sigma_{13}\sigma_{24}\sigma_{14}, &
\sigma_{25}\sigma_{13}\sigma_{24}\sigma_{35}, &
\sigma_{25}\sigma_{13}\sigma_{35}\sigma_{14}, &
\sigma_{25}\sigma_{24}\sigma_{35}\sigma_{14}, \\

\sigma_{35}\sigma_{14}\sigma_{13}\sigma_{24}, &
\sigma_{35}\sigma_{14}\sigma_{25}\sigma_{13}, &
\sigma_{35}\sigma_{14}\sigma_{25}\sigma_{24}, &
\sigma_{35}\sigma_{25}\sigma_{13}\sigma_{24}. \\
\end{array} \]
In fact, up to cyclic permutation, we have only the following five $5$-braids:
\begin{equation*} 
\begin{array}{ccccc}
\sigma_{13}\sigma_{24}\sigma_{14}\sigma_{25}, &
\sigma_{13}\sigma_{24}\sigma_{35}\sigma_{14}, &
\sigma_{13}\sigma_{24}\sigma_{35}\sigma_{25}, &
\sigma_{13}\sigma_{35}\sigma_{14}\sigma_{25}, &
\sigma_{14}\sigma_{25}\sigma_{24}\sigma_{35}.
\end{array} \end{equation*}
Since $K$ is determined by the conjugacy class of $\beta$, it suffices to consider these.\footnote{In fact, Sage can prove that these five 5-braids all have the same conjugacy class, though we do not need this fact.  We thank Gage Martin for pointing out that this should be the case.}

For each of the five 5-braids $\beta$ in this list, we claim that the closure of the braid $\beta^4$ is not fibered.  Indeed, Sage tells us that the Alexander polynomial of this knot is not even monic:
\begin{verbatim} 
sage: (b**4).alexander_polynomial()                                                                                                                        
5*t^-12 - 9*t^-11 + 8*t^-10 - 4*t^-9 - 3*t^-8 + 12*t^-7 - 17*t^-6 +
12*t^-5 - 3*t^-4 - 4*t^-3 + 8*t^-2 - 9*t^-1 + 5
\end{verbatim}
In other words, we compute for each of these braids that
\begin{multline*}
\Delta_{\widehat{\beta^4}}(t) = 5t^6 - 9t^5 + 8t^4 - 4t^3 - 3t^2 + 12t - 17 \\
+ 12t^{-1} - 3t^{-2} - 4t^{-3} + 8t^{-4} - 9t^{-5} + 5t^{-6},
\end{multline*}
and so Lemma~\ref{lem:fibered-lift} tells us that none of these $\beta$ are exchangeable.
\end{proof}

\begin{remark}
The initial version of this paper on the arXiv used a more involved computational argument which required input from other software, namely SnapPy \cite{snappy}, KLO \cite{KLO}, and Szab\'o's program \cite{szabo-program} for computing knot Floer homology.  We remain grateful to the authors of these programs for making that computation possible, and to the authors of \cite{baker-experiment} for a helpful guide to some of the techniques involved.
\end{remark}

\section{The proof of Theorem \ref{thm:main}}
\label{sec:proof}
In  this section, we prove Theorem \ref{thm:main} following the outline in \S\ref{sec:intro}. Let $\F:=\Z/2\Z$.

\begin{proof}[Proof of Theorem \ref{thm:main}]

Suppose $K\subset S^3$ satisfies the hypotheses of the theorem, 
so that \begin{equation}\label{eqn:Kh-eq}\dim_\F\Khr(K;\F) =5.\end{equation} Then $\dim_\Q\Khr(K;\Q) \leq 5$, by the Universal Coefficient Theorem. This dimension must be odd, and  cannot be 1 or 3 since $K$ would be an unknot or trefoil, respectively, in those cases \cite{km-unknot,bs-trefoil}, which would violate \eqref{eqn:Kh-eq}. Therefore, \[\dim_\Q\Khr(K;\Q) = 5.\] Since $\Khr(K;\F)$ is supported in a single positive $\delta$-grading, the same is true of $\Khr(K;\Q)$. Then, by  \cite[Theorem 1]{bdlls}, which relies on Dowlin's spectral sequence from Khovanov homology to knot Floer homology \cite{dowlin},  the bigraded knot Floer homology of $K$ satisfies
\begin{equation*}
\hfkhat(K;\Q) \cong \hfkhat(T(2,5);\Q).
\end{equation*}
In particular, $K$ has genus 2, and is fibered and strongly quasipositive, as stated in  \cite{bdlls} and in Lemma \ref{lem:hfk-fibered-sqp}, with Alexander polynomial  $\Delta_K(t) = t^2-t+1-t^{-1}+t^{-2}.$

Let us assume for a contradiction that $K\neq T(2,5)$. Then Theorem \ref{thm:main2} implies that there exists a pseudo-Anosov 5-braid $\beta$ with unknotted closure $B$, such that $K$ is the lift of the braid axis $A$ in the branched double cover \[\Sigma(S^3,B) \cong S^3.\]
In particular, $K$ is a doubly-periodic knot with unknotted quotient   $A$ and axis $B$. Below, we prove that $A$ is also braided with respect to $B$.

In \cite[Theorem 1.1]{stoffregen-zhang}, Stoffregen and Zhang use an annular elaboration of Lipshitz and Sarkar's Khovanov spectrum  \cite{lipshitz-sarkar} to prove a Smith-type  inequality relating the Khovanov homology of  $K$ and the annular Khovanov homology of its  quotient $A$, \begin{equation}\label{eqn:AKh}\dim_\F\Kh(K;\F) \geq \dim_\F \AKh(A;\F),\end{equation} where the latter is defined with respect to the embedding  \[A\hookrightarrow S^1\times D^2 = S^3\setminus N(B)\] of $A$ in the solid torus complement of a neighborhood of $B$. Note that \begin{equation*}\label{eqn:Kh-dim}\dim_\F\Kh(K;\F) = 2\dim_\F\Khr(K;\F) = 10.\end{equation*} It then follows from \eqref{eqn:AKh} and  the Universal Coefficient Theorem that \begin{equation}\label{eqn:AKh-ineq}10\geq \dim_\C\AKh(A;\C).\end{equation} In \cite[Theorem~5.16]{xie}, Xie proves that there is a spectral sequence from annular Khovanov homology to annular instanton homology over $\C$ which respects the annular gradings. This implies the inequality  \[\dim_\C\AKh(A,i;\C) \geq \dim_\C\AHI(A,i;\C),\] for each annular grading $i$. 
Since $\lk(A,B)=5$, we must have that  $\AHI(A, k;\C) \neq 0$ for some odd $k\geq 5$, by \cite[Theorem 1.6]{xie-zhang}. It follows that \begin{equation}\label{eqn:AKh-top}\AKh(A, k;\C) \neq 0\end{equation} for some odd $k\geq 5$. Let $k_{\max}$ be the maximal such value of $k$.

In \cite{grigsby-licata-wehrli}, Grigsby, Licata, and Wehrli define an $\mathfrak{sl}_2(\C)$-action on $\AKh(A;\C)$ in which the weight space grading corresponds to the annular grading. In particular, this implies the following unimodality of dimensions, \[ \dim_\C\AKh(A,1;\C) \geq \dim_\C\AKh(A,3;\C)\geq \dim_\C\AKh(A,5;\C) \geq \dots.\] Combined with \eqref{eqn:AKh-ineq} and \eqref{eqn:AKh-top} and the symmetry \[\AKh(A,i;\C)\cong \AKh(A,-i;\C),\] 
this implies that  \[\AKh(A,k_{\max};\C) \cong \C.\]  Indeed, if \[d = \dim_\C \AKh(A,k_{\max};\C),\] then $\dim_\C \AKh(A,i;\C)$ is at least $d$ in each of the $k_{\max}+1 \geq 6$ gradings \[-k_{\max},-k_{\max}+2,\dots,k_{\max}-2,k_{\max},\] and so \[6d \leq d(k_{\max}+1) \leq \dim_\C \AKh(A;\C) \leq 10.\]  Thus, $d=1$ and $5 \leq k_{\max} \leq 9$.
It then follows from \cite[Theorem 1.3]{xie-zhang} that $A$ is a $k_{\max}$-braid with respect to $B$. In particular, we have that $k_{\max}=5$ since $\lk(A,B)=5$.

We have now shown that $A$ and $B$ are mutually braided unknots. In other words, the pseudo-Anosov 5-braid $\beta$ is \emph{exchangeable}, as in \S\ref{sec:computations}. But this contradicts Theorem~\ref{thm:fakeT25}, so we must have $K=T(2,5)$, proving Theorem~\ref{thm:main}.
\end{proof}

\bibliographystyle{alpha}
\bibliography{References}

\newcommand{\etalchar}[1]{$^{#1}$}
\begin{thebibliography}{ABG{\etalchar{+}}19}

\bibitem[ABG{\etalchar{+}}19]{baker-experiment}
Chris Anderson, Kenneth~L. Baker, Xinghua Gao, Marc Kegel, Khanh Le, Kyle
  Miller, Sinem Onaran, Geoffrey Sangston, Samuel Tripp, Adam Wood, and Ana
  Wright.
\newblock L-space knots with tunnel number $>1$ by experiment.
\newblock arXiv:1909.00790, 2019.

\bibitem[Bak06]{baker-smallgenus}
Kenneth~L. Baker.
\newblock Small genus knots in lens spaces have small bridge number.
\newblock {\em Algebr. Geom. Topol.}, 6:1519--1621, 2006.

\bibitem[BDL{\etalchar{+}}21]{bdlls}
John~A. Baldwin, Nathan Dowlin, Adam~Simon Levine, Tye Lidman, and Radmila
  Sazdanovic.
\newblock Khovanov homology detects the figure-eight knot.
\newblock {\em Bull. Lond. Math. Soc.}, 53(3):871--876, 2021.

\bibitem[BI09]{baader-ishikawa}
Sebastian Baader and Masaharu Ishikawa.
\newblock Legendrian graphs and quasipositive diagrams.
\newblock {\em Ann. Fac. Sci. Toulouse Math. (6)}, 18(2):285--305, 2009.

\bibitem[BLSY21]{blsy}
John~A. Baldwin, Zhenkun Li, Steven Sivek, and Fan Ye.
\newblock Small {D}ehn surgery and {$SU(2)$}.
\newblock arXiv:2110.02874, 2021.

\bibitem[BS18]{bs-trefoil}
John~A. Baldwin and Steven Sivek.
\newblock Khovanov homology detects the trefoils.
\newblock arXiv:1801.07634, 2018.

\bibitem[BZ03]{burde-zieschang}
Gerhard Burde and Heiner Zieschang.
\newblock {\em Knots}, volume~5 of {\em De Gruyter Studies in Mathematics}.
\newblock Walter de Gruyter \& Co., Berlin, second edition, 2003.

\bibitem[CC09]{cotton-clay}
Andrew Cotton-Clay.
\newblock Symplectic {F}loer homology of area-preserving surface
  diffeomorphisms.
\newblock {\em Geom. Topol.}, 13(5):2619--2674, 2009.

\bibitem[CDGW]{snappy}
Marc Culler, Nathan~M. Dunfield, Matthias Goerner, and Jeffrey~R. Weeks.
\newblock Snap{P}y, a computer program for studying the geometry and topology
  of $3$-manifolds.
\newblock Available at \url{http://snappy.computop.org} (26/04/2021).

\bibitem[CH13]{ch}
Vincent Colin and Ko~Honda.
\newblock Reeb vector fields and open book decompositions.
\newblock {\em J. Eur. Math. Soc.}, 55(2):443--507, 2013.

\bibitem[Dow18]{dowlin}
Nathan Dowlin.
\newblock {A spectral sequence from Khovanov homology to knot Floer homology}.
\newblock arXiv:1811.07848, 2018.

\bibitem[FLP12]{flp}
Albert Fathi, Fran\c{c}ois Laudenbach, and Valentin Po\'{e}naru.
\newblock {\em Thurston's work on surfaces}, volume~48 of {\em Mathematical
  Notes}.
\newblock Princeton University Press, Princeton, NJ, 2012.
\newblock Translated from the 1979 French original by Djun M. Kim and Dan
  Margalit.

\bibitem[FM12]{farb-margalit}
Benson Farb and Dan Margalit.
\newblock {\em A primer on mapping class groups}, volume~49 of {\em Princeton
  Mathematical Series}.
\newblock Princeton University Press, Princeton, NJ, 2012.

\bibitem[GG05]{gambaudo-ghys}
Jean-Marc Gambaudo and \'{E}tienne Ghys.
\newblock Braids and signatures.
\newblock {\em Bull. Soc. Math. France}, 133(4):541--579, 2005.

\bibitem[GLW18]{grigsby-licata-wehrli}
J.~Elisenda Grigsby, Anthony~M. Licata, and Stephan~M. Wehrli.
\newblock {Annular Khovanov homology and knotted Schur-Weyl representations}.
\newblock {\em Compositio Math.}, 154:459--502, 2018.

\bibitem[GO89]{gabai-oertel}
David Gabai and Ulrich Oertel.
\newblock Essential laminations in {$3$}-manifolds.
\newblock {\em Ann. of Math. (2)}, 130(1):41--73, 1989.

\bibitem[Gol75]{goldsmith}
Deborah~L. Goldsmith.
\newblock Symmetric fibered links.
\newblock In {\em Knots, groups, and {$3$}-manifolds ({P}apers dedicated to the
  memory of {R}. {H}. {F}ox)}, pages 3--23. Ann. of Math. Studies, No. 84.
  1975.

\bibitem[Hed10]{hedden-positivity}
Matthew Hedden.
\newblock Notions of positivity and the {O}zsv\'{a}th-{S}zab\'{o} concordance
  invariant.
\newblock {\em J. Knot Theory Ramifications}, 19(5):617--629, 2010.

\bibitem[HKM07]{hkm-veering}
Ko~Honda, William~H. Kazez, and Gordana Mati\'{c}.
\newblock Right-veering diffeomorphisms of compact surfaces with boundary.
\newblock {\em Invent. Math.}, 169(2):427--449, 2007.

\bibitem[HKM08]{hkm-rv2}
Ko~Honda, William~H. Kazez, and Gordana Mati\'c.
\newblock Right-veering diffeomorphisms of compact surfaces with boundary.
  {II}.
\newblock {\em Geom. Topol.}, 12(4):2057--2094, 2008.

\bibitem[HMS08]{hms}
Mikami Hirasawa, Kunio Murasugi, and Daniel~S. Silver.
\newblock When does a satellite knot fiber?
\newblock {\em Hiroshima Math. J.}, 38(3):411--423, 2008.

\bibitem[HS05]{hutchings-sullivan}
Michael Hutchings and Michael Sullivan.
\newblock The periodic {F}loer homology of a {D}ehn twist.
\newblock {\em Algebr. Geom. Topol.}, 5:301--354, 2005.

\bibitem[Hut02]{hutchings-index}
Michael Hutchings.
\newblock An index inequality for embedded pseudoholomorphic curves in
  symplectizations.
\newblock {\em J. Eur. Math. Soc. (JEMS)}, 4(4):313--361, 2002.

\bibitem[HW18]{hedden-watson}
Matthew Hedden and Liam Watson.
\newblock On the geography and botany of knot {F}loer homology.
\newblock {\em Selecta Math. (N.S.)}, 24(2):997--1037, 2018.

\bibitem[IK17]{ito-kawamuro-obf}
Tetsuya Ito and Keiko Kawamuro.
\newblock Essential open book foliations and fractional {D}ehn twist
  coefficient.
\newblock {\em Geom. Dedicata}, 187:17--67, 2017.

\bibitem[Juh08]{juhasz-decomposition}
Andr\'as Juh\'asz.
\newblock Floer homology and surface decompositions.
\newblock {\em Geom. Topol.}, 12(1):299--350, 2008.

\bibitem[KLT20]{klt1}
\c{C}a\u{g}atay Kutluhan, Yi-Jen Lee, and Clifford~Henry Taubes.
\newblock {$\rm HF{=}HM$}, {I}: {H}eegaard {F}loer homology and
  {S}eiberg-{W}itten {F}loer homology.
\newblock {\em Geom. Topol.}, 24(6):2829--2854, 2020.

\bibitem[KM07]{km-book}
Peter Kronheimer and Tomasz Mrowka.
\newblock {\em Monopoles and three-manifolds}, volume~10 of {\em New
  Mathematical Monographs}.
\newblock Cambridge University Press, Cambridge, 2007.

\bibitem[KM11]{km-unknot}
P.~B. Kronheimer and T.~S. Mrowka.
\newblock Khovanov homology is an unknot-detector.
\newblock {\em Publ. Math. Inst. Hautes \'Etudes Sci.}, (113):97--208, 2011.

\bibitem[LS14]{lipshitz-sarkar}
Robert Lipshitz and Sucharit Sarkar.
\newblock A {K}hovanov stable homotopy type.
\newblock {\em J. Amer. Math. Soc.}, 27(4):983--1042, 2014.

\bibitem[LT11]{lanneau-thiffeault}
Erwan Lanneau and Jean-Luc Thiffeault.
\newblock On the minimum dilatation of pseudo-{A}nosov homeromorphisms on
  surfaces of small genus.
\newblock {\em Ann. Inst. Fourier (Grenoble)}, 61(1):105--144, 2011.

\bibitem[LT12]{lee-taubes}
Yi-Jen Lee and Clifford~Henry Taubes.
\newblock Periodic {F}loer homology and {S}eiberg-{W}itten-{F}loer cohomology.
\newblock {\em J. Symplectic Geom.}, 10(1):81--164, 2012.

\bibitem[Mis21]{misev2017families}
Filip Misev.
\newblock On families of fibred knots with equal {S}eifert forms.
\newblock {\em Comm. Anal. Geom.}, 29(2):465--482, 2021.

\bibitem[MM02]{miyazaki-motegi-periodic}
Katura Miyazaki and Kimihiko Motegi.
\newblock Seifert fibering surgery on periodic knots.
\newblock In {\em Proceedings of the {F}irst {J}oint {J}apan-{M}exico {M}eeting
  in {T}opology ({M}orelia, 1999)}, volume 121, pages 275--285, 2002.

\bibitem[Mor85]{morton}
H.R. Morton.
\newblock Exchangeable braids.
\newblock In {\em Low-dimensional topology}, volume~95 of {\em London Math.
  Soc. Lecture Note Ser.}, pages 86--105. London Math. Soc., 1985.

\bibitem[MS93]{masur-smillie}
Howard Masur and John Smillie.
\newblock Quadratic differentials with prescribed singularities and
  pseudo-{A}nosov diffeomorphisms.
\newblock {\em Comment. Math. Helv.}, 68(2):289--307, 1993.

\bibitem[Ni07]{ni-hfk}
Yi~Ni.
\newblock Knot {F}loer homology detects fibred knots.
\newblock {\em Invent. Math.}, 170(3):577--608, 2007.

\bibitem[Ni21]{ni-monodromy}
Yi~Ni.
\newblock {A note on knot Floer homology and fixed points of monodromy}.
\newblock arXiv:2106.03884, 2021.

\bibitem[NZ18]{ni-zhang-finite}
Yi~Ni and Xingru Zhang.
\newblock {Finite Dehn surgeries on knots in $S^3$}.
\newblock {\em Algebr. Geom. Topol.}, 19:441--492, 2018.

\bibitem[NZ21]{ni-zhang-torus}
Yi~Ni and Xingru Zhang.
\newblock {Characterizing slopes for torus knots, II}.
\newblock arXiv:2106.02806, 2021.

\bibitem[OS03]{osz-alternating}
Peter Ozsv\'{a}th and Zolt\'{a}n Szab\'{o}.
\newblock Heegaard {F}loer homology and alternating knots.
\newblock {\em Geom. Topol.}, 7:225--254, 2003.

\bibitem[OS04]{osz-genus}
Peter Ozsv{\'a}th and Zoltan Szab{\'o}.
\newblock Holomorphic disks and genus bounds.
\newblock {\em Geom. Topol.}, 8:311--334, 2004.

\bibitem[OS05]{osz-lens}
Peter Ozsv\'{a}th and Zolt\'{a}n Szab\'{o}.
\newblock On knot {F}loer homology and lens space surgeries.
\newblock {\em Topology}, 44(6):1281--1300, 2005.

\bibitem[Pla06]{plamenevskaya-branched}
Olga Plamenevskaya.
\newblock Transverse knots, branched double covers and {H}eegaard {F}loer
  contact invariants.
\newblock {\em J. Symplectic Geom.}, 4(2):149--170, 2006.

\bibitem[Rob01]{roberts}
Rachel Roberts.
\newblock Taut foliations in punctured surface bundles. {II}.
\newblock {\em Proc. London Math. Soc. (3)}, 83(2):443--471, 2001.

\bibitem[{Sag}21]{sagemath}
The {Sage Developers}.
\newblock {\em {S}ageMath, the {S}age {M}athematics {S}oftware {S}ystem
  ({V}ersion 9.3.rc2)}, 2021.
\newblock {\tt https://www.sagemath.org}.

\bibitem[Swe19]{KLO}
Frank Swenton.
\newblock {KLO (Knot-Like Objects)}.
\newblock \url{https://www.klo-software.net}, 2019.

\bibitem[SZ18]{stoffregen-zhang}
Matthew Stoffregen and Melissa Zhang.
\newblock {Localization in Khovanov homology}.
\newblock arXiv:1810.04769, 2018.

\bibitem[Sza]{szabo-program}
Zoltan Szab{\'o}.
\newblock {Knot Floer homology calculator}.
\newblock Available at
  \url{https://web.math.princeton.edu/~szabo/HFKcalc.html}.

\bibitem[Thu88]{thurston-diffeomorphisms}
William~P. Thurston.
\newblock On the geometry and dynamics of diffeomorphisms of surfaces.
\newblock {\em Bull. Amer. Math. Soc. (N.S.)}, 19(2):417--431, 1988.

\bibitem[Thu98]{thurston-fiber}
William~P. Thurston.
\newblock Hyperbolic structures on 3-manifolds, {II}: Surface groups and
  3-manifolds which fiber over the circle.
\newblock arXiv:math/9801045, 1998.

\bibitem[Wal69]{waldhausen-involution}
Friedhelm Waldhausen.
\newblock \"{U}ber {I}nvolutionen der {$3$}-{S}ph\"{a}re.
\newblock {\em Topology}, 8:81--91, 1969.

\bibitem[WZ92]{wang-zhou}
Shi~Cheng Wang and Qing Zhou.
\newblock Symmetry of knots and cyclic surgery.
\newblock {\em Trans. Amer. Math. Soc.}, 330(2):665--676, 1992.

\bibitem[Xie21]{xie}
Yi~Xie.
\newblock Instantons and annular {K}hovanov homology.
\newblock {\em Adv. Math.}, 388:Paper No. 107864, 51, 2021.

\bibitem[XZ19]{xie-zhang}
Yi~Xie and Boyu Zhang.
\newblock {Instanton Floer homology for sutured manifolds with tangles}.
\newblock arXiv:1907.00547, 2019.

\end{thebibliography}

\end{document}